  \theoremstyle{plain}
    \newtheorem{thm}{Theorem}[section]
    \newtheorem{prop}[thm]{Proposition}
    \newtheorem{subsec}[thm]{}
\theoremstyle{definition}
    \newtheorem{defn}[thm]{Definition}
        \newtheorem{remark}[thm]{Remark}
\theoremstyle{remark}
\title{}
\author{}
\date{}
\begin{document}
\title{Compatible, split and family Loday-algebras}


\author{Apurba Das\\
Indian Institute of Technology, Kharagpur \\Kharagpur 721302, West Bengal, India. \\
Email: apurbadas348@gmail.com}







\maketitle

\noindent

\thispagestyle{empty}



\noindent {\bf Abstract.} Given a nonsymmetric operad $\mathcal{O}$, we first construct two new nonsymmetric operads $\mathcal{O}^{\mathrm{comp}}$ and $\mathcal{O}^{\mathrm{Dend}}$. These operads are respectively useful to study compatible and split Loday-algebras. As an application of the operad $\mathcal{O}^{\mathrm{comp}}$, we show that the cohomology of a compatible associative algebra carries a Gerstenhaber structure. We give an application of the operad $\mathcal{O}^\mathrm{Dend}$ to dendriform algebras and find generalizations to other Loday-algebras. In the end, we construct another operad $\mathrm{Fam}(\mathcal{O}^\Omega)^\mathrm{Dend}$ to study dendriform-family algebras recently introduced in the literature. We also define and study homotopy dendriform-family algebras.\\

\noindent {\bf 2020 Mathematics Subject Classifications.} 16E40, 16S80, 17A30.

\noindent {\bf Keywords.} Loday-algebras, Nonsymmetric operads, Compatible algebras, Split algebras, Dendriform-family algebras.

\tableofcontents

\vspace{0.2cm}

\section{Introduction}\label{sec-1}
Cohomology of algebraic structures is a very classical and powerful subject of research. This subject developed with the pioneer works of Hochschild, Serre, Chevalley, Eilenberg and Koszul \cite{hoch1,hoch2,chev,kosz}, among others. The cohomology of associative algebras is useful to study extensions of algebras. Later, Gerstenhaber introduced formal deformations of associative algebras and finds a significant connection with cohomology theory \cite{gers}. He also finds a cup-product and a degree $-1$ graded Lie bracket on cohomology which makes the graded space of cohomology a $G$-algebra (popularly known as Gerstenhaber algebra) structure. In \cite{gers-voro} Gerstenhaber and Voronov generalized this study to nonsymmetric operads equipped with multiplications and obtained a connection with Deligne's conjecture. Cohomologies for other algebraic structures are also developed over the years. Among others, Nijenhuis and Richardson \cite{nij-ric} developed cohomology of Lie algebras, Balavoine \cite{bala-leib} developed cohomology of Leibniz algebras etc. In  \cite{bala} Balavoive considers the cohomology theory of algebras over any symmetric, binary quadratic operad. Algebras whose defining identity/identities have no shuffling are called `Loday-algebras'. For example, associative algebras are Loday-algebras whereas Lie algebras are not. In \cite{das,yau} the authors showed that Loday-algebras can be described by multiplications in suitable nonsymmetric operads. Hence their cohomology automatically inherits a Gerstenhaber structure.

\medskip

Given a type of algebraic structure, three different kinds of structures are becoming popular in recent times. They are namely compatible algebraic structures, split algebraic structures and family algebraic structures \cite{aguiar,agu,loday-aguiar,cdm,A4,fard-bondia,kre,loday,Odesskii2,Odesskii3,yau,zhang,zhang-free,liu}. Since split and family algebraic structures are mostly studied for Loday-algebras, we will deal with them using multiplications on nonsymmetric operads. 
Our aim in this paper is to systematically study compatible Loday-algebras, split Loday-algebras and family Loday-algebras using nonsymmetric operads.

\subsection{Compatible Loday-algebras}

(Contents of Section \ref{sec-3}). Two algebraic structures of the same kind are said to be compatible if their sum also defines an algebraic structure of that kind. For example, two associative products $\cdot_1 : A \otimes A \rightarrow A$ and $\cdot_2 : A \otimes A \rightarrow A$ on a ${\bf k}$-module $A$ are said to be `compatible' if
\begin{align}\label{comp-ass-id}
(a \cdot_1 b) \cdot_2 c + (a \cdot_2 b) \cdot_1 c = a \cdot_1 (b \cdot_2 c) + a \cdot_2 (b \cdot_1 c), \text{ for } a, b, c \in A.
\end{align}
This can be equivalently described by the fact that the sum $\cdot_1 + \cdot_2$ defines a new associative product on $A$. In this case, the triple $(A, \cdot_1, \cdot_2)$ is called a compatible associative algebra. Compatible associative algebras are closely related with linear deformations and the classical Yang-Baxter equation \cite{Odesskii2,Odesskii3}. Recently, cohomology theory of compatible associative algebras has been defined in \cite{cdm} and deformations are studied. However, the structure of the cohomology ring of compatible associative algebras is not yet found. On the other hand, cohomologies of other compatible algebras are not even defined yet. By keeping these questions in mind, we first introduce a notion of `compatible multiplication' in a nonsymmetric operad. More precisely, a compatible multiplication is a pair $(\pi_1, \pi_2)$ consisting of two multiplications $\pi_1$ and $\pi_2$ satisfying a compatibility condition which is equivalent that their sum also defines a multiplication (see Definition \ref{defn-comp-mul}). Given a nonsymmetric operad $\mathcal{O}$, next we construct a new operad $\mathcal{O}^\mathrm{comp}$. A compatible multiplication on the operad $\mathcal{O}$ is equivalent to multiplication on the operad $\mathcal{O}^\mathrm{comp}$. Given a compatible multiplication $(\pi_1, \pi_2)$ on $\mathcal{O}$, we define the cohomology induced by $(\pi_1, \pi_2)$ as the cohomology induced by the corresponding multiplication on the operad $\mathcal{O}^\mathrm{comp}$. When $\mathcal{O}$ is the endomorphism operad $\mathrm{End}_A$ and compatible multiplication $(\pi_1, \pi_2)$ corresponds to the compatible associative algebra $(A, \cdot_1, \cdot_2)$, we obtain the cohomology of the compatible associative algebra introduced in \cite{cdm}. As a consequence of our study, we obtain a Gerstenhaber algebra structure on the cohomology of any compatible associative algebra. It is important to remark that compatible algebras can be studied over any symmetric, binary quadratic operad (see Remark \ref{remark-comp}).

\subsection{Split Loday-algebras}

(Contents of Section \ref{sec-4}). In \cite{loday} J.-L. Loday introduced a notion of dendriform algebra in his study of periodicity phenomenons in algebraic $K$-theory. A dendriform algebra is a  $(A, \prec, \succ)$ consisting of a ${\bf k}$-module $A$ with two bilinear maps $\prec, \succ : A \otimes A \rightarrow A$ satisfying
\begin{align}
(a \prec b) \prec c &= a \prec (b \prec c + b \succ c) \label{dend-1}\\
(a \succ b) \prec c &= a \succ (b \prec c), \label{dend-2}\\
(a \prec b + a \succ b) \succ c &= a \succ (b \succ c), \text{ for } a,b,c \in A. \label{dend-3}
\end{align}
\noindent If $(A, \prec, \succ)$ is a dendriform algebra, it turns out that the sum $\prec  + \succ$ is an associative product, called the total associative product. Thus, a dendriform algebra is a type of associative algebra whose product splits into two operations satisfying the dendriform identities (\ref{dend-1})-(\ref{dend-3}). Generalizing these identies in the context of nonsymmetric operads, we define a notion of dendriform-multiplication in a nonsymmetric operad $\mathcal{O}$. A dendriform-multiplication naturally splits a multiplication. Next, we construct a new operad $\mathcal{O}^\mathrm{Dend}$. We observe that a dendriform-multiplication on $\mathcal{O}$ is nothing but a multiplication on the operad $\mathcal{O}^\mathrm{Dend}$.
Using this, we define the cohomology induced by a dendriform-multiplication. This cohomology generalizes the standard cohomology of a dendriform algebra given in \cite{lod-val-book,A4,yau}.

Besides dendriform algebras, J.-L. Loday and his collaborators introduced some other algebras (e.g. diassociative, triassociative, tridendriform, quadri etc.) in their study of combinatorial algebras. All these algebras split associative algebras in the sense that the sum of the defining operations of these algebras forms an associative product. They are all Loday-algebras. Given any type `Lod' among the above mentioned Loday-algebras, we also construct an operad $\mathcal{O}^\mathrm{Lod}$ which allows us to define Lod-multiplication on $\mathcal{O}.$ In particular, when $\mathrm{Lod} = \mathrm{Dend}$, the type of dendriform algebras, we obtain dendriform-multiplication. We also write the explicit definition of tridendriform-multiplication by considering $\mathrm{Lod} = \mathrm{TriDend}$, the type of tridendriform algebras.

\subsection{Family Loday-algebras}

(Contents of Section \ref{sec-5}). Family algebraic structures relative to a semigroup $\Omega$ first appeared in the works of K. Ebrahimi-Fard, J. Gracia-Bondia and F. Patras in the algebraic formulations of renormalization in quantum field theory \cite{fard-bondia} (see also \cite{kre}). The notion of the Rota-Baxter family was also introduced in the same paper as the family analogue of the Rota-Baxter operator (we refer \cite{rota} for more details on Rota-Baxter operators). A Rota-Baxter family induces a dendriform-family structure (family analogue of dendriform structure) \cite{zhang,zhang-free}. Moreover, a dendriform-family structure gives rise to an associative algebra relative to $\Omega$ in the sense of Aguiar \cite{agu}. In Section \ref{sec-5}, we first construct a new nonsymmetric operad $\mathcal{O}^\Omega$ from a given operad $\mathcal{O}$. We observed that there is a nonsymmetric {suboperad} $\mathrm{Fam}(\mathcal{O}^\Omega)^\mathrm{Dend} \subset (\mathcal{O}^\Omega)^\mathrm{Dend}$, where $(\mathcal{O}^\Omega)^\mathrm{Dend}$ is obtained from $\mathcal{O}^\Omega$ by applying the construction $(~)^\mathrm{Dend}$ given in Section \ref{sec-4}. If $\mathcal{O} = \mathrm{End}_A$ is the endomorphism operas, a multiplication on the operad $\mathrm{Fam}(\mathrm{End}_A^\Omega)^\mathrm{Dend}$ is equivalent to having a dendriform-family structure on $A$. This characterization allows us to define the cohomology of a dendriform-family algebra.

Finally, we introduce $Dend_\infty$-family algebras as the homotopy analogue of dendriform-family algebras. We observed that a $Dend_\infty$-family algebra induces an ordinary $Dend_\infty$-algebra, thus generalising a result of \cite{zhang} in the homotopy context. W show that a $Dend_\infty$-algebra gives rise to an $A_\infty$-algebra relative to $\Omega$, which generalizes the result of Aguiar. In the end, we define the homotopy Rota-Baxter family that induces a ${Dend}_\infty$-family algebra.

\medskip

Throughout the paper, ${\bf k}$ is a commutative unital ring with characteristic $0$. All modules, (multi)linear maps or homomorphisms, tensor products are over ${\bf k}$. In Section \ref{sec-5}, we assume that $\Omega$ is a semigroup (not necessarily commutative and unital).

\section{Nonsymmetric operads}\label{sec-2}
In this section, we recall some basics of nonsymmetric operads. In particular, we mention the Gerstenhaber algebra structure on the cohomology induced by multiplications in a nonsymmetric operad. Our main references are \cite{gers-voro,lod-val-book,das}.

\begin{defn}
A {\bf nonsymmetric operad} $\mathcal{O}$ in the category of ${\bf k}$-modules consists of a collection $\{ \mathcal{O}(n) \}_{n \geq 1}$ of ${\bf k}$-modules together with ${\bf k}$-bilinear maps (called the partial compositions)
\begin{align*}
\circ_i : \mathcal{O}(m) \otimes \mathcal{O}(n) \rightarrow \mathcal{O}(m+n-1), \text{ for } m, n \geq 1 \text{ and } 1 \leq i \leq m
\end{align*}
and a distinguished element $\mathds{1} \in \mathcal{O}(1)$ (called the identity element) satisfying for $f \in \mathcal{O}(m)$, $g \in \mathcal{O}(n)$ and $h \in \mathcal{O}(p)$,
\begin{align}
& \begin{cases} (f \circ_i g) \circ_{i+j-1} h = f \circ_i (g \circ_j h), ~~ \text{ if } 1 \leq i \leq m, ~ 1 \leq j \leq n,\\
 (f \circ_i g) \circ_{j+n-1} h = (f \circ_j h) \circ_i g,~ \text{ if } 1 \leq i < j \leq m,
 \end{cases} \label{pc-iden}\\
& \qquad  f \circ_i \mathds{1} = \mathds{1} \circ_1 f = f, ~ \text{ if } 1 \leq i \leq m.
\end{align}
We often denote a nonsymmetric operad as above by the triple $\mathcal{O} = ( \{ \mathcal{O}(n) \}_{n \geq 1}, \circ_i, \mathds{1}).$
\end{defn}

\begin{defn}
Let $\mathcal{O} = ( \{ \mathcal{O}(n) \}_{n \geq 1}, \circ_i, \mathds{1})$ and $\mathcal{O}' = ( \{ \mathcal{O}'(n) \}_{n \geq 1}, \circ'_i, \mathds{1}')$ be two nonsymmetric operads. A {\bf morphism} $\varphi : \mathcal{O} \rightarrow \mathcal{O}'$ of nonsymmetric operads is collection $\varphi = \{ \varphi_n : \mathcal{O}(n) \rightarrow \mathcal{O}'(n) \}_{n \geq 1}$ of ${\bf k}$-linear maps satisfying
\begin{align*}
\varphi_{m+n-1} (f \circ_i g) = \varphi_m (f) \circ_i' \varphi_n (g) ~~ \text{ and } ~~ \varphi_1 (\mathds{1}) = \mathds{1}',~ \text{ for } f \in \mathcal{O}(m), g \in \mathcal{O}(n) \text{ and } 1 \leq i \leq m.
\end{align*}
\end{defn}

A toy example of a nonsymmetric operad is given by the endomorphism operad associated to a ${\bf k}$-module. Let $A$ be a ${\bf k}$-module. The endomorphism operad associated to $A$ is given by $\mathrm{End}_A = (   \{  \mathrm{End}_A (n)   \}_{n \geq 1}, \circ_i, \mathds{1}   )$, where
\begin{align*}
\mathrm{End}_A (n) := \mathrm{Hom}(A^{\otimes n}, A), \text{ for } n \geq 1
\end{align*}
and for any $f \in \mathrm{End}_A (m)$, $g \in \mathrm{End}_A (n)$ and $1 \leq i \leq m$, the partial composition $f \circ_i g$ is the substitution of $g$ in the $i$-th input of $f$. We will frequently use this endomorphism operad as the motivation of our various study.

A nonsymmetric operad naturally gives rise to a pre-Lie system in the sense of \cite{gers-voro}. Hence one can obtain a degree $-1$ graded Lie algebra from a nonsymmetric operad. More generally, let $\mathcal{O} = (  \{ \mathcal{O}(n) \}_{n \geq 1}, \circ_i, \mathds{1}   )$ be a nonsymmetric operad. Then the graded ${\bf k}$-module $\mathcal{O}(\bullet) := \oplus_{n \geq 1} \mathcal{O} (n)$ carries the following degree $-1$ graded Lie bracket (generalizing the Gerstenhaber bracket \cite{gers,gers2})
\begin{align}\label{g-bracket}
\llbracket f, g \rrbracket = \sum_{i=1}^m (-1)^{(n-1)(i-1)}~ f \circ_i g ~-~ (-1)^{(m-1)(n-1)} \sum_{i=1}^n (-1)^{(m-1)(i-1)} g \circ_i f , 
\end{align}
for $f \in \mathcal{O}(m)$ and $g \in \mathcal{O}(n)$. In other words, the shifted graded ${\bf k}$-module $\mathcal{O}(\bullet)[1] := \oplus_{n \geq 0} \mathcal{O}(n+1)$ equipped with the bracket (\ref{g-bracket}) is a graded Lie algebra.

\begin{defn}
(i) Let $\mathcal{O} = (  \{ \mathcal{O}(n) \}_{n \geq 1}, \circ_i, \mathds{1}   )$ be a nonsymmetric operad. An element $\pi \in \mathcal{O}(2)$ is said to be a {\bf multiplication} on $\mathcal{O}$ if
\begin{align*}
\pi \circ_1 \pi = \pi \circ_2 \pi.
\end{align*}

(ii) Let $\mathcal{O}$ and $\mathcal{O}'$ be two nonsymmetric operads. Let $\pi \in \mathcal{O}(2)$ be a multiplication on the operad $\mathcal{O}$ and $\pi' \in \mathcal{O}'(2)$ be a multiplication on the operad $\mathcal{O}'$. A {\bf morphism} from $\pi$ to $\pi'$ is given by a morphism $\varphi : \mathcal{O} \rightarrow \mathcal{O}'$ of nonsymmetric operads satisfying $\varphi_2 (\pi) = \pi'$.
\end{defn}

\begin{remark}
For any ${\bf k}$-module $A$, consider the endomorphism operad $\mathrm{End}_A$. Then an element $\pi \in \mathrm{End}_A (2)$ corresponds to a ${\bf k}$-bilinear product $\cdot : A \otimes A \rightarrow A$ defined by $a \cdot b = \pi (a, b)$, for $a, b \in A$. Then $\pi$ is a multiplication on the operad $\mathrm{End}_A$ if and only if the product $\cdot$ defines an associative algebra structure on $A$. Thus, multiplications on nonsymmetric operads generalize associative algebra structures on ${\bf k}$-modules.
\end{remark}

\begin{remark}
Let $\mathcal{O}$ be a nonsymmetric operad and $\pi \in \mathcal{O}(2)$ be a multiplication. Then we have from (\ref{g-bracket}) that
\begin{align*}
\llbracket \pi, \pi \rrbracket = 2 \big( \pi \circ_1 \pi - \pi \circ_2 \pi   \big) = 0.
\end{align*}
This shows that $\pi$ is a Maurer-Cartan element in the graded Lie algebra $\mathcal{O}(\bullet)[1] = ( \oplus_{n \geq 0} \mathcal{O}(n+1), \llbracket ~,~ \rrbracket )$. Conversely, if the characteristic of the ring ${\bf k}$ is other than $2$, then a Maurer-Cartan element of $\mathcal{O}(\bullet)[1]$ is nothing but a multiplication on the nonsymmetric operad $\mathcal{O}$.
\end{remark}

Any multiplication $\pi \in \mathcal{O}(2)$ on the operad $\mathcal{O}$ induces a (degree $0$) cup-product
\begin{align}\label{c-product}
f \smile_\pi g :=  (-1)^{mn+1} ~(\pi \circ_2 g)\circ_1 f, \text{ for } f \in \mathcal{O}(m), g \in \mathcal{O}(n).
\end{align}
Since $\pi$ is a multiplication, it follows that the cup-product $\smile_\pi$ is associative. When $\mathcal{O}$ is the endomorphism operad $\mathrm{End}_A$ associated to the ${\bf k}$-module $A$, and the multiplication $\pi \in \mathrm{End}_A(2)$ corresponds to the associative product $\cdot : A \otimes A \rightarrow A$, then the cup-product becomes
\begin{align*}
(f \smile_\pi g)(a_1, \ldots, a_{m+n}) = (-1)^{mn+1}~ f(a_1, \ldots, a_m) \cdot g(a_{m+1}, \ldots, a_{m+n}),
\end{align*}
for $f \in \mathrm{End}_A (m)$, $g \in \mathrm{End}_A(n)$ and $a_1, \ldots, a_{m+n} \in A$. This is precisely the Gerstenhaber's cup-product defined in \cite{gers,gers2} on the space of endomorphisms on $A$. Thus, the product (\ref{c-product}) generalizes the Gerstenhaber's cup-product.

A multiplication on a nonsymmetric operad also induces a cohomology theory which generalizes the Hochschild cohomology theory of associative algebras. Let $\mathcal{O}$ be a nonsymmetric operad and $\pi \in \mathcal{O}(2)$ be a multiplication. For each $n \geq 1$, we define a ${\bf k}$-module $C^n_\pi (\mathcal{O})$ simply by $C^n_\pi (\mathcal{O}) := \mathcal{O}(n)$ and a map $\delta_\pi : C^n_\pi (\mathcal{O}) \rightarrow C^{n+1}_\pi (\mathcal{O})$ by
\begin{align*}
\delta_\pi (f) := \llbracket \pi, f \rrbracket, \text{ for } f \in C^n_\pi (\mathcal{O}).
\end{align*}
Since $\llbracket \pi, \pi \rrbracket = 0$, it follows that $\delta_\pi \circ \delta_\pi = 0$. In other words, $(C^\bullet_\pi (\mathcal{O}), \delta_\pi)$ is a cochain complex. The corresponding cohomology groups are denoted by $H^\bullet_\pi (\mathcal{O})$ and called the {\bf cohomology induced by the multiplication $\pi$}.

\medskip

\begin{defn}
A {\bf Gerstenhaber algebra} is a triple $\big(  \mathcal{G} , \smile, \llbracket ~, ~ \rrbracket \big)$ consisting of a graded ${\bf k}$-module $ \mathcal{G} = \oplus_{n \in \mathbb{Z}} \mathcal{G}(n)  $ together with a graded-commutative associative product $\smile ~\! : \mathcal{G}  \otimes \mathcal{G}  \rightarrow \mathcal{G} $ and a degree $-1$ graded Lie bracket $\llbracket  ~, ~ \rrbracket : \mathcal{G} \otimes \mathcal{G} \rightarrow \mathcal{G} $ satisfying the following Leibniz rule
\begin{align}\label{l-rule}
\llbracket x, y \smile z \rrbracket = \llbracket x, y \rrbracket \smile y + (-1)^{(m-1)n}~ y \smile \llbracket x, z \rrbracket, \text{ for } x \in \mathcal{G}(m), y \in \mathcal{G}(n) \text{ and } z \in \mathcal{G}.
\end{align}
\end{defn}

In this case, we often say that the graded ${\bf k}$-module $\mathcal{G}$ carries a Gerstenhaber structure when the product and bracket are clear from the context. Gerstenhaber algebra appears in many places of mathematics and mathematical physics. In particular, the Hochschild cohomology ring inherits a Gerstenhaber algebra structure. The same result holds for the cohomology induced by multiplication in a nonsymmetric operad. Let $\mathcal{O}$ be a nonsymmetric operad and $\pi$ be a multiplication. The product (\ref{c-product}) induces an associative product (denoted by the same notation $\smile_\pi$) on the graded space $H^\bullet_\pi (\mathcal{O})$ of cohomology. The induced product turns out to be graded-commutative on the level of cohomology. The degree $-1$ graded Lie bracket (\ref{g-bracket}) also passes onto the cohomology. Moreover, the induced product and degree $-1$ graded Lie bracket on the cohomology $H^\bullet_\pi (\mathcal{O})$ satisfy further the Leibniz rule (\ref{l-rule}). Hence one obtains the following result.

\begin{thm}
Let $\mathcal{O}$ be a nonsymmetric operad and $\pi \in \mathcal{O}(2)$ be a multiplication. Then the graded space $H^\bullet_\pi (\mathcal{O})$ of cohomology inherits a Gerstenhaber algebra structure.
\end{thm}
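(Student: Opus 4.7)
The plan is to verify the four structural ingredients of a Gerstenhaber algebra on $H^\bullet_\pi(\mathcal{O})$: that the cup-product $\smile_\pi$ and the bracket $\llbracket -, - \rrbracket$ of $\mathcal{O}(\bullet)$ descend to cohomology, that the induced cup-product is graded-commutative, and that the Leibniz rule (\ref{l-rule}) holds on cohomology classes. Associativity of $\smile_\pi$ is already known at the cochain level as an immediate consequence of $\pi \circ_1 \pi = \pi \circ_2 \pi$ and the axioms (\ref{pc-iden}), so it will automatically be inherited by $H^\bullet_\pi(\mathcal{O})$.

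Descent of the bracket is essentially formal. Since $(\mathcal{O}(\bullet)[1], \llbracket -, - \rrbracket)$ is a graded Lie algebra and $\llbracket \pi, \pi \rrbracket = 0$, the graded Jacobi identity applied to the triple $(\pi, f, g)$ yields
\begin{align*}
\delta_\pi \llbracket f, g \rrbracket = \llbracket \delta_\pi f, g \rrbracket + (-1)^{m-1} \llbracket f, \delta_\pi g \rrbracket
\end{align*}
for $f \in C^m_\pi(\mathcal{O}), g \in C^n_\pi(\mathcal{O})$, so $\delta_\pi$ is a graded derivation of $\llbracket -, - \rrbracket$ and the bracket passes to $H^\bullet_\pi(\mathcal{O})$. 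Descent of the cup-product amounts to the Leibniz-type identity $\delta_\pi(f \smile_\pi g) = \delta_\pi f \smile_\pi g + (-1)^m f \smile_\pi \delta_\pi g$. I would verify this by expanding both sides using the definitions (\ref{g-bracket}) and (\ref{c-product}) and then reducing via the partial composition axioms (\ref{pc-iden}) together with $\pi \circ_1 \pi = \pi \circ_2 \pi$; the terms rearrange and cancel after a direct sign bookkeeping.

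The two substantive points—graded-commutativity and the Leibniz rule—both require explicit cochain-level homotopies, following Gerstenhaber's classical strategy as transported to the operadic framework in \cite{gers-voro}. I would introduce the auxiliary operation
\begin{align*}
f \, \overline{\circ} \, g := \sum_{i=1}^{m} (-1)^{(n-1)(i-1)}\, f \circ_i g \qquad (f \in \mathcal{O}(m),\, g \in \mathcal{O}(n)),
\end{align*}
which is the positive half of the bracket (\ref{g-bracket}). A direct computation of $\delta_\pi(f \,\overline{\circ}\, g)$ via (\ref{pc-iden}) produces an identity of the form
\begin{align*}
f \smile_\pi g - (-1)^{mn}\, g \smile_\pi f = \pm \bigl( \delta_\pi(f \,\overline{\circ}\, g) - \delta_\pi f \,\overline{\circ}\, g - (-1)^{m-1}\, f \,\overline{\circ}\, \delta_\pi g \bigr),
\end{align*}
whose right-hand side is a coboundary when $f$ and $g$ are cocycles, giving graded-commutativity on $H^\bullet_\pi(\mathcal{O})$. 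A parallel (and slightly longer) homotopy formula expresses $\llbracket f, g \smile_\pi h \rrbracket - \llbracket f, g \rrbracket \smile_\pi h - (-1)^{(m-1)n}\, g \smile_\pi \llbracket f, h \rrbracket$ as $\delta_\pi$ applied to an explicit expression built from $\overline{\circ}$, which vanishes on cohomology and hence delivers (\ref{l-rule}).

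The main obstacle is the sign- and index-bookkeeping in the two homotopy identities of the last paragraph: the cancellations rely on splitting the sums in $\delta_\pi(f \,\overline{\circ}\, g)$ (and in its Leibniz analogue) according to the two cases in (\ref{pc-iden}) and telescoping carefully against $\delta_\pi f \,\overline{\circ}\, g$ and $f \,\overline{\circ}\, \delta_\pi g$. No new operadic input is needed beyond the partial composition axioms, since the derivation is formally identical to Gerstenhaber's original Hochschild-cochain argument—the endomorphism operad $\mathrm{End}_A$ with its multiplication encodes exactly that structure, and the calculation transfers verbatim to an arbitrary $\mathcal{O}$ with multiplication $\pi$.
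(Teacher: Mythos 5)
Your proposal is correct and follows exactly the route the paper has in mind: the paper does not write out a proof but simply asserts that the cup-product and bracket descend, that the product becomes graded-commutative, and that the Leibniz rule holds on cohomology, deferring to the classical Gerstenhaber--Voronov argument for operads with multiplication. Your sketch—descent of the bracket via Jacobi, descent of $\smile_\pi$ via the derivation property of $\delta_\pi$, and the two homotopy formulas built from the pre-Lie product $f\,\overline{\circ}\,g$—is precisely that argument, so it supplies the details the paper omits rather than a different proof.
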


\medskip

Let $\mathcal{O}, \mathcal{O}'$ be two nonsymmetric operads and $\pi, \pi'$ be two multiplications on the operads $\mathcal{O}$ and $\mathcal{O}'$, respectively. If $\varphi : \mathcal{O} \rightarrow \mathcal{O}'$ is a morphism between multiplications from $\pi$ to $\pi'$, then $\phi$ induces a morphism of cochain complexes from $(C^\bullet_\pi (\mathcal{O}), \delta_\pi)$ to $(C^\bullet_{\pi'} (\mathcal{O}), \delta_{\pi'})$. The induced map preserves the cup-product and the degree $-1$ graded Lie bracket. Therefore, one obtains a morphism $\varphi_\ast : H^\bullet_\pi (\mathcal{O}) \rightarrow H^\bullet_{\pi'} (\mathcal{O}')$ between cohomology groups which is also a morphism between Gerstenhaber algebras.

\section{Compatible multiplications and the operad $\mathcal{O}^\mathrm{comp}$}\label{sec-3}

In this section, we define the notion of `compatible multiplication' on a nonsymmetric operad $\mathcal{O}$. We construct the operad $\mathcal{O}^\mathrm{comp}$ to study compatible multiplications on the operad $\mathcal{O}$. We also define the cohomology induced by a compatible multiplication on $\mathcal{O}$. As an application of our study, we conclude that the cohomology of a compatible associative algebra carries a Gerstenhaber structure.


\begin{defn}\label{defn-comp-mul}
(i) Let $\pi_1$ and $\pi_2 $ be two multiplications on the nonsymmetric operad $\mathcal{O}$. They are said to be {\bf compatible} if
\begin{align}\label{comp}
\pi_1 \circ_1 \pi_1 + \pi_2 \circ_1 \pi_1 = \pi_1 \circ_2 \pi_2 + \pi_2 \circ_2 \pi_1.
\end{align} 

(ii) A {\bf compatible multiplication} on the nonsymmetric operad $\mathcal{O}$ is a pair $(\pi_1, \pi_2)$ consisting of two multiplications $\pi_1$ and  $\pi_2$ that are compatible.
\end{defn}

Note that the compatibility condition (\ref{comp}) is equivalent to the fact that the linear combination $\lambda \pi_1 + \mu \pi_2$ is a multiplication on $\mathcal{O}$, for any $\lambda, \mu \in {\bf k}.$ On the other hand, using the expression (\ref{g-bracket}), we have
\begin{align*}
\llbracket \pi_1, \pi_2 \rrbracket = \pi_1 \circ_1 \pi_2 - \pi_1 \circ_2 \pi_2 + \pi_2 \circ_1 \pi_1 - \pi_2 \circ_2 \pi_1.
\end{align*}
Thus, it follows that $\pi_1$ and $\pi_2$ are compatible if and only if $\llbracket \pi_1, \pi_2 \rrbracket = 0$.

\begin{remark}
Let $A$ be a ${\bf k}$-module and $\mathrm{End}_A$ be the endomorphism operad associated to $A$. Let $\pi_1, \pi_2 \in \mathrm{End}_A (2)$ be two elements which correspond to ${\bf k}$-bilinear products $\cdot_1,~ \cdot_2 : A \otimes A \rightarrow A$, respectively. Then we know that $\pi_1$ (resp. $\pi_2$) is a multiplication on the operad $\mathrm{End}_A$ if and only if ~$\cdot_1$ (resp. $\cdot_2$ ) defines an associative product on $A$. Finally, the compatibility condition (\ref{comp}) is equivalent to the compatibility condition (\ref{comp-ass-id}) among associative products. Therefore, compatible multiplications on a nonsymmetric operad is a generalization of compatible associative algebras.
\end{remark}


\medskip

Let $\mathcal{O} = ( \{ \mathcal{O}(n) \}_{n \geq 1}, \circ_i, \mathds{1})$ be a nonsymmetric operad. For each $n \geq 1$, we consider the ${\bf k}$-module 
\begin{align*}
\mathcal{O}^\mathrm{comp}(n) := \underbrace{\mathcal{O}(n) \oplus \cdots \oplus \mathcal{O}(n)}_{n \text{ copies}}.
\end{align*}
We define ${\bf k}$-bilinear maps $\circ_i^\mathrm{comp} : \mathcal{O}^\mathrm{comp}(m) \otimes \mathcal{O}^\mathrm{comp}(n) \rightarrow \mathcal{O}^\mathrm{comp}(m+n-1)$, for $1 \leq i \leq m$, by
\begin{align*}
f \circ_i^\mathrm{comp} g := \big( f_1 \circ_i g_1, \ldots, \underbrace{\sum_{r+s = k+1} f_r \circ_i g_s }_{k\text{-th place}} , \ldots, f_m \circ_i g_n \big), 
\end{align*}
for $f= (f_1, \ldots, f_m) \in \mathcal{O}^\mathrm{comp}(m)$ and $g = (g_1, \ldots, g_m) \in \mathcal{O}^\mathrm{comp}(n)$. Then we have the following.

\begin{thm}\label{comp-new-operad}
The triple $\mathcal{O}^\mathrm{comp} = (   \{ \mathcal{O}^\mathrm{comp} (n) \}_{n \geq 1}, \circ_i^\mathrm{comp}, \mathds{1})$ is a nonsymmetric operad. Moreover, an element  $\pi = (\pi_1, \pi_2) \in \mathcal{O}^\mathrm{comp}(2)$ is a multiplication on the operad $\mathcal{O}^\mathrm{comp}$ if and only if the pair $(\pi_1, \pi_2)$ of elements of $\mathcal{O}(2)$ forms a compatible multiplication on $\mathcal{O}$.
\end{thm}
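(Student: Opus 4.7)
The proof naturally splits into two parts: verifying the three operad axioms for $\mathcal{O}^\mathrm{comp}$, and then extracting the compatibility condition from the multiplication equation in degree $2$. My plan is to handle both parts componentwise, leveraging the convolution-like structure of $\circ_i^\mathrm{comp}$ and reducing everything to identities in $\mathcal{O}$.

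For the operad axioms, I would take $f = (f_1, \ldots, f_m)$, $g = (g_1, \ldots, g_n)$, $h = (h_1, \ldots, h_p)$ and unfold the $k$-th component of a nested partial composition. For the sequential identity with $1 \leq i \leq m$ and $1 \leq j \leq n$, both sides reduce, after swapping the order of summation over the index $l$ of the intermediate term, to a single triple sum
\[
\sum_{r+s+t = k+2} (f_r \circ_i g_s) \circ_{i+j-1} h_t \quad \text{versus} \quad \sum_{r+s+t = k+2} f_r \circ_i (g_s \circ_j h_t),
\]
and the first identity in (\ref{pc-iden}) for $\mathcal{O}$ gives equality termwise. The parallel identity (for $1 \leq i < j \leq m$) is analogous. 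For the identity axiom, note that $\mathcal{O}^\mathrm{comp}(1) = \mathcal{O}(1)$, so $\mathds{1}$ is still the unit; the convolution sums collapse trivially (the range forces $s=1$ on the right and $r=1$ on the left), and $f_k \circ_i \mathds{1} = f_k = \mathds{1} \circ_1 f_k$ from the identity axiom in $\mathcal{O}$.

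For the characterization of multiplications, let $\pi = (\pi_1, \pi_2) \in \mathcal{O}^\mathrm{comp}(2)$ and directly compute the three components of $\pi \circ_1^\mathrm{comp} \pi$ and $\pi \circ_2^\mathrm{comp} \pi$ in $\mathcal{O}^\mathrm{comp}(3)$. The convolution range $r+s = k+1$ with $1 \leq r, s \leq 2$ picks out $(r,s) = (1,1)$ when $k=1$, the two pairs $(1,2)$ and $(2,1)$ when $k=2$, and $(2,2)$ when $k=3$. So $\pi \circ_1^\mathrm{comp} \pi = \pi \circ_2^\mathrm{comp} \pi$ is equivalent to the three separate equations $\pi_1 \circ_1 \pi_1 = \pi_1 \circ_2 \pi_1$, $\pi_2 \circ_1 \pi_2 = \pi_2 \circ_2 \pi_2$, and $\pi_1 \circ_1 \pi_2 + \pi_2 \circ_1 \pi_1 = \pi_1 \circ_2 \pi_2 + \pi_2 \circ_2 \pi_1$. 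The first two say $\pi_1$ and $\pi_2$ are multiplications on $\mathcal{O}$, while the third is exactly the compatibility condition (\ref{comp}).

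The only delicate point is the bookkeeping in the first part: one must be careful that the intermediate convolution index $l$ runs over the correct range ($1 \leq l \leq m+n-1$ on the outside, and $\max(1, k+1-p) \leq l$ etc.\ on the inside) so that the double sums on both sides really do rearrange into the same triple sum over $(r,s,t)$. Once the index ranges are aligned, however, no computation in $\mathcal{O}$ beyond invoking (\ref{pc-iden}) termwise is needed, and the proof is essentially a routine verification.
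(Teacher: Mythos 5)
Your proposal is correct and follows essentially the same route as the paper: componentwise verification of the operad axioms via the convolution formula (reducing each $k$-th component to a triple sum over $r+s+t=k+2$ and invoking the axioms of $\mathcal{O}$ termwise), followed by the direct degree-$2$ computation that splits $\pi\circ_1^{\mathrm{comp}}\pi=\pi\circ_2^{\mathrm{comp}}\pi$ into the three equations saying $\pi_1,\pi_2$ are multiplications and are compatible. Your extra care with the intermediate summation index is a sound precaution but adds nothing beyond what the paper's displayed computation already implements.
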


\begin{proof}
Let $ f = (f_1, \ldots, f_m) \in \mathcal{O}^\mathrm{comp}(m)$, $ {g} = (g_1, \ldots, g_m) \in \mathcal{O}^\mathrm{comp}(n)$ and ${h} = (h_1, \ldots, h_p) \in \mathcal{O}^\mathrm{comp}(p)$. Then for $1 \leq i \leq m$ and $1 \leq j \leq n$, we have
\begin{align*}
&(f \circ_i^\mathrm{comp} g) \circ^\mathrm{comp}_{i+j-1} h \\
&= \big(  f_1 \circ_i g_1, \ldots, \underbrace{\sum_{r+s=k+1} f_r \circ_i g_s}_{k\text{-th place}}, \ldots, f_m \circ_i g_n \big) \circ_{i+j-1} (h_1, \ldots, h_p) \\
&= \big( (f_1 \circ_i g_1) \circ_{i+j-1} h_1, \ldots, \underbrace{  \sum_{r+s+t = k+2} (f_r \circ_i g_s) \circ_{i+j-1} h_t }_{k\text{-th place}}, \ldots, (f_m \circ_i g_n) \circ_{i+j-1} h_p   \big) \\
&= \big( f_1 \circ_i (g_1 \circ_j h_1), \ldots, \underbrace{\sum_{r+s+t = k+2} f_r \circ_i (g_s \circ_j h_t) }_{k\text{-th place}}, \ldots, f_m \circ_i (g_n \circ_j h_p)  \big) \\
&= f \circ_i^\mathrm{comp} (g \circ_j^\mathrm{comp} h).
\end{align*}
Similarly, for $1 \leq i < j \leq m$, we have
\begin{align*}
&(f \circ_i^\mathrm{comp} g) \circ_{j+n-1}^\mathrm{comp} h \\
&= (f_1 \circ_i g_1, \ldots, \underbrace{\sum_{r+s = k+1} f_r \circ_i g_s}_{k\text{-th place}}, \ldots, f_m \circ_i g_n) \circ_{j+n-1}^\mathrm{comp} (h_1, \ldots, h_p) \\
&= \big( (f_1 \circ_i g_1) \circ_{j+n-1} h_1, \ldots, \underbrace{ \sum_{r+s+t = k+2} (f_r \circ_i g_s) \circ_{j+n-1} h_t}_{k\text{-th place}}, \ldots, (f_m \circ_i g_n) \circ_{j+n-1} h_p \big) \\
&= \big(  (f_1 \circ_j h_1) \circ_i g_1 , \ldots, \underbrace{\sum_{r+s+t = k+2} (f_r \circ_j h_t) \circ_i g_s}_{k\text{-th place}}, \ldots, (f_m \circ_j h_p) \circ_i g_n \big) \\
&= (f \circ_j^\mathrm{comp} h) \circ_i^\mathrm{comp} g.
\end{align*}
Finally, for any $f = (f_1, \ldots, f_m) \in \mathcal{O}^\mathrm{comp}(m)$ and $1 \leq i \leq m$, we have
\begin{align*}
f \circ_i^\mathrm{comp} \mathds{1} = (f_1, \ldots, f_m) \circ_i^\mathrm{comp} \mathds{1} = ( f_1 \circ_i \mathds{1}, \ldots, f_m \circ_i \mathds{1}) = (f_1, \ldots, f_m) = f
\end{align*}
and
\begin{align*}
\mathds{1} \circ_1^\mathrm{comp} f = \mathds{1} \circ_1^\mathrm{comp} (f_1, \ldots, f_m) = ( \mathds{1} \circ_1 f_1, \ldots, \mathds{1} \circ_1 f_m) = (f_1, \ldots, f_m) = f.
\end{align*}
This proves that $(   \{ \mathcal{O}^\mathrm{comp} (n) \}_{n \geq 1}, \circ_i^\mathrm{comp}, \mathds{1})$ is a nonsymmetric operad.

\medskip

For the second part, we observe that
\begin{align*}
&\pi \circ^\mathrm{comp}_1 \pi ~-~ \pi \circ_2^\mathrm{comp} \pi \\
&= (\pi_1 \circ_1 \pi_1,~ \pi_1 \circ_1 \pi_2 +\pi_2 \circ_1 \pi_1,~ \pi_2 \circ_1 \pi_2) ~-~ (\pi_1 \circ_2 \pi_1,~ \pi_1 \circ_2 \pi_2 +\pi_2 \circ_2 \pi_1,~ \pi_2 \circ_2 \pi_2) \\
&= \big(  \pi_1 \circ_1 \pi_1 - \pi_1 \circ_2 \pi_1, ~ \pi_1 \circ_1 \pi_2 +\pi_2 \circ_1 \pi_1 - \pi_1 \circ_2 \pi_2 - \pi_2 \circ_2 \pi_1 ,~\pi_2 \circ_1 \pi_2 - \pi_2 \circ_2 \pi_2  \big).
\end{align*}
This shows that $\pi$ is a multiplication on the operad $\mathcal{O}^\mathrm{comp}$ if and only if $(\pi_1, \pi_2)$ is a compatible multiplication on the operad $\mathcal{O}$.
\end{proof}

\medskip

Let $(\pi_1, \pi_2)$ be a compatible multiplication on the operad $\mathcal{O}$. Then we have seen that $\pi = (\pi_1, \pi_2)$ is an (ordinary) multiplication on the operad $\mathcal{O}^\mathrm{comp}$. Hence we may consider the cohomology induced by the multiplication $\pi = (\pi_1, \pi_2)$ on the operad $\mathcal{O}^\mathrm{comp}$ (see Section \ref{sec-2}). More precisely, we consider the cochain complex $\big( C^\bullet_{(\pi_1, \pi_2)} (\mathcal{O}^\mathrm{comp}) , \delta_{(\pi_1, \pi_2)} \big)$, where
\begin{align*}
C^n_{(\pi_1, \pi_2)} (\mathcal{O}^\mathrm{comp}) := \mathcal{O}^\mathrm{comp} (n) = \underbrace{\mathcal{O}(n) \oplus \cdots \oplus \mathcal{O}(n)}_{n \text{ times}}, \text{ for } n \geq 1.
\end{align*}
Before we write the differential $\delta_{(\pi_1, \pi_2)}$, we observe that the degree $-1$ graded Lie bracket on $C^\bullet_{(\pi_1, \pi_2)} (\mathcal{O}^\mathrm{comp})$ is given by
\begin{align}\label{double-lie}
\llbracket (f_1, \ldots, f_m) , (g_1, \ldots, g_n ) \rrbracket_{\mathcal{O}^\mathrm{comp}} =~&  \sum_{i=1}^m (-1)^{(i-1)(n-1)} ~(f_1, \ldots, f_m) \circ_i^\mathrm{comp} (g_1, \ldots, g_n ) \\
& \qquad - (-1)^{(m-1) (n-1) } \sum_{i=1}^n (-1)^{(i-1)(m-1)} ~ (g_1, \ldots, g_n) \circ_i^\mathrm{comp} (f_1, \ldots, f_m ) \nonumber \\
=~& \big( \llbracket f_1, g_1 \rrbracket , \ldots, \underbrace{\sum_{r+s = k+1} \llbracket f_r , g_s \rrbracket }_{k\text{-th place}}, \ldots, \llbracket f_m , g_n \rrbracket \big), \nonumber
\end{align}
for $(f_1, \ldots, f_m) \in C^m_{(\pi_1, \pi_2)} (\mathcal{O}^\mathrm{comp})$ and $(g_1, \ldots, g_n) \in C^n_{(\pi_1, \pi_2)} (\mathcal{O}^\mathrm{comp})$.
With the above expression of the bracket, the differential $\delta_{(\pi_1, \pi_2)}$ is given by 
\begin{align}\label{diff-diff}
\delta_{(\pi_1, \pi_2)} (f_1, \ldots, f_n) := \llbracket (\pi_1, \pi_2), (f_1, \ldots, f_n) \rrbracket_{\mathcal{O}^\mathrm{comp}} = \big(  \delta_{\pi_1} (f_1), \ldots, \underbrace{\delta_{\pi_1} (f_k) + \delta_{\pi_2} (f_{k-1})}_{k\text{-th place}}, \ldots, \delta_{\pi_2} (f_n)  \big),
\end{align}
for $(f_1, \ldots, f_n) \in C^n_{(\pi_1, \pi_2)} (\mathcal{O}^\mathrm{comp})$. The cohomology of the cochain complex $\big( C^\bullet_{(\pi_1, \pi_2)} (\mathcal{O}^\mathrm{comp}) , \delta_{(\pi_1, \pi_2)} \big)$ is called the {\bf cohomology of the compatible multiplication} $(\pi_1, \pi_2)$ on the operad $\mathcal{O}$. We denote the corresponding cohomology groups by $H^\bullet_{(\pi_1, \pi_2)} (\mathcal{O}^\mathrm{comp}).$

\medskip

Observe that the graded space $C^\bullet_{(\pi_1, \pi_2)} (\mathcal{O}^\mathrm{comp})$ also carries a cup-product induced by the multiplication $\pi = (\pi_1, \pi_2) \in \mathcal{O}^\mathrm{comp}(2)$. The cup-product is explicitly given by
\begin{align}\label{double-cup}
(f_1, \ldots, f_m) \smile_{(\pi_1, \pi_2)} (g_1, \ldots, g_n) = \big(  f_1 \smile_{\pi_1} g_1, \ldots, \underbrace{\sum_{r+s=k+1} f_r \smile_{\pi_1} g_s + \sum_{r+s=k} f_r \smile_{\pi_2} g_s}_{k\text{-th place}}, \ldots, f_m \smile_{\pi_2} g_n   \big),
\end{align}
for $(f_1, \ldots, f_m) \in C^m_{(\pi_1, \pi_2)} (\mathcal{O}^\mathrm{comp})$ and $(g_1, \ldots, g_n) \in C^n_{(\pi_1, \pi_2)} (\mathcal{O}^\mathrm{comp})$. As a summary, we get the following.

\begin{thm}
Let $(\pi, \pi')$ be a compatible multiplication on the nonsymmetric operad $\mathcal{O}$. Then the graded space of cohomology $H^\bullet_{(\pi_1, \pi_2)} (\mathcal{O}^\mathrm{comp})$ inherits a Gerstenhaber algebra structure.
\end{thm}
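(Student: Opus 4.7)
The plan is to reduce the statement to the general Gerstenhaber result for multiplications in nonsymmetric operads that was stated earlier in Section \ref{sec-2}, so that no new cohomological calculations are needed. The key observation is that the construction $\mathcal{O} \rightsquigarrow \mathcal{O}^{\mathrm{comp}}$ is designed precisely to convert a compatible multiplication into an ordinary one, and the cohomology of a compatible multiplication $(\pi_1, \pi_2)$ on $\mathcal{O}$ was by definition identified with the cohomology induced by the multiplication $\pi = (\pi_1,\pi_2) \in \mathcal{O}^{\mathrm{comp}}(2)$.

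First I would invoke Theorem \ref{comp-new-operad} to conclude that $\mathcal{O}^{\mathrm{comp}} = (\{\mathcal{O}^{\mathrm{comp}}(n)\}_{n \geq 1}, \circ_i^{\mathrm{comp}}, \mathds{1})$ is a genuine nonsymmetric operad and that $\pi = (\pi_1, \pi_2)$ is a multiplication on it, i.e.\ $\pi \circ_1^{\mathrm{comp}} \pi = \pi \circ_2^{\mathrm{comp}} \pi$. At this point the general theorem at the end of Section \ref{sec-2} applies verbatim to the operad $\mathcal{O}^{\mathrm{comp}}$ with multiplication $\pi$: the graded ${\bf k}$-module $H^\bullet_\pi(\mathcal{O}^{\mathrm{comp}})$ carries a graded-commutative associative cup-product $\smile_\pi$ together with a degree $-1$ graded Lie bracket $\llbracket -,- \rrbracket_{\mathcal{O}^{\mathrm{comp}}}$ satisfying the Leibniz rule (\ref{l-rule}). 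Since by construction $H^\bullet_{(\pi_1,\pi_2)}(\mathcal{O}^{\mathrm{comp}}) = H^\bullet_\pi(\mathcal{O}^{\mathrm{comp}})$, this gives exactly the desired Gerstenhaber algebra structure.

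To make the statement concrete rather than abstract, I would also point out that under this identification the bracket and cup-product are the ones already written out in formulas (\ref{double-lie}) and (\ref{double-cup}), applied to cohomology classes; the fact that these descend to $H^\bullet_{(\pi_1,\pi_2)}(\mathcal{O}^{\mathrm{comp}})$ and satisfy the Gerstenhaber axioms is automatic from the general theory. There is essentially no obstacle: the only thing one has to trust is the general Gerstenhaber theorem for multiplications in nonsymmetric operads (which is recalled above and proved in \cite{gers-voro}); everything else has been arranged by the design of $\mathcal{O}^{\mathrm{comp}}$. The proof is therefore a one-line reduction, and the bulk of the work — verifying the operad axioms and the equivalence between compatible multiplications on $\mathcal{O}$ and multiplications on $\mathcal{O}^{\mathrm{comp}}$ — has already been carried out in Theorem \ref{comp-new-operad}.
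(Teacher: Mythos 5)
Your proposal is correct and matches the paper's own (implicit) argument exactly: the paper states this theorem "as a summary" after observing that $(\pi_1,\pi_2)$ is a multiplication on the operad $\mathcal{O}^{\mathrm{comp}}$ (Theorem \ref{comp-new-operad}) and writing out the induced bracket (\ref{double-lie}) and cup-product (\ref{double-cup}), so the result follows from the general Gerstenhaber theorem of Section \ref{sec-2} applied to $\mathcal{O}^{\mathrm{comp}}$. Nothing is missing.
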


\begin{remark}
Let $\mathcal{O}$ be the endomorphism operad $\mathrm{End}_A$ associated to the ${\bf k}$-module $A$. Let $(\pi_1, \pi_2)$ be a compatible multiplication on the operad $\mathrm{End}_A$ which corresponds to the compatible associative algebra $(A, \cdot_1, \cdot_2)$. It follows from the above discussions that the cohomology induced by the compatible multiplication $(\pi_1, \pi_2)$ is given by the cohomology of the cochain complex $(C^\bullet_{(\pi_1, \pi_2)} (\mathrm{End}_A^\mathrm{comp}), \delta_{(\pi_1, \pi_2)})$, where
\begin{align*}
C^n_\mathrm{(\pi_1, \pi_2)} (\mathrm{End}_A^\mathrm{comp})  = \underbrace{\mathrm{Hom}(A^{\otimes n}, A) \oplus \cdots \oplus \mathrm{Hom}(A^{\otimes n}, A)}_{n \text{~ copies}}, \text{ for } n \geq 1
\end{align*}
and $\delta_{(\pi_1, \pi_2)} : C^n_\mathrm{(\pi_1, \pi_2)} (\mathrm{End}_A^\mathrm{comp}) \rightarrow C^{n+1}_\mathrm{(\pi_1, \pi_2)} (\mathrm{End}_A^\mathrm{comp})$ is given by the formula (\ref{diff-diff}). Here $\delta_{\pi_1}$ and $\delta_{\pi_2}$ denote the Hochschild coboundary operators for the associative algebras $(A, \cdot_1)$ and $(A, \cdot_2)$, respectively. This is the cochain complex for the cohomology of the compatible associative algebra $(A, \cdot_1, \cdot_2)$ considered in \cite{cdm}. As a conclusion, the cohomology of the compatible associative algebra $(A, \cdot_1, \cdot_2)$ inherits a Gerstenhaber structure. The explicit cup-product and degree $-1$ graded Lie bracket can be easily obtained from (\ref{double-cup}) and (\ref{double-lie}), respectively.
\end{remark}


\medskip

Let $\mathcal{O}$ be a nonsymmetric operad. Consider the operad $\mathcal{O}^\mathrm{comp} = ( \{ \mathcal{O}^\mathrm{comp} (n) \}_{n \geq 1}, \circ_i^\mathrm{comp}, \mathds{1})$ given in Theorem \ref{comp-new-operad}. We define a collection $\varphi = \{ \varphi_n : \mathcal{O}^\mathrm{comp} (n) \rightarrow \mathcal{O}(n) \}_{n \geq 1}$ of ${\bf k}$-linear maps by
\begin{align*}
\varphi_n (f_1, \ldots, f_n) = f_1 + \cdots + f_n, ~\text{ for } (f_1, \ldots, f_n) \in \mathcal{O}^\mathrm{comp} (n).
\end{align*}

\begin{prop}\label{comp-mor}
With the above notation, $\varphi :  \mathcal{O}^\mathrm{comp} \rightarrow \mathcal{O}$ is a morphism of nonsymmetric operads.
\end{prop}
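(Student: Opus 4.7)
The plan is to verify the two defining axioms of an operad morphism, namely compatibility with partial compositions and preservation of the identity element.

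Preservation of the identity is immediate: since $\mathcal{O}^\mathrm{comp}(1)$ consists of a single copy of $\mathcal{O}(1)$, the identity of $\mathcal{O}^\mathrm{comp}$ is simply $\mathds{1} \in \mathcal{O}(1)$, and $\varphi_1$ is the identity map, so $\varphi_1(\mathds{1}) = \mathds{1}$.

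For the partial composition axiom, given $f = (f_1, \ldots, f_m) \in \mathcal{O}^\mathrm{comp}(m)$ and $g = (g_1, \ldots, g_n) \in \mathcal{O}^\mathrm{comp}(n)$, I would compute $\varphi_{m+n-1}(f \circ_i^\mathrm{comp} g)$ by summing the $k$-th components given in the definition of $\circ_i^\mathrm{comp}$. This yields
\begin{align*}
\varphi_{m+n-1}(f \circ_i^\mathrm{comp} g) = \sum_{k=1}^{m+n-1} \sum_{r+s=k+1} f_r \circ_i g_s = \sum_{r=1}^{m} \sum_{s=1}^{n} f_r \circ_i g_s,
\end{align*}
since each pair $(r,s)$ with $1 \leq r \leq m$ and $1 \leq s \leq n$ contributes to exactly one component (indexed by $k = r+s-1$). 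On the other side, $\bigl(\sum_r f_r\bigr) \circ_i \bigl(\sum_s g_s\bigr)$ expands, by bilinearity of $\circ_i$, to the same double sum. Equality of the two expressions completes the verification.

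This argument is essentially a bookkeeping exercise; there is no serious obstacle. The only point that warrants mention is the combinatorial identity that each pair $(r,s)$ appears exactly once in the reindexed sum, which is what makes the ``convolution-like'' structure of $\circ_i^\mathrm{comp}$ compatible with the additive map $\varphi$.
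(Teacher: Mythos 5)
Your proof is correct and follows essentially the same route as the paper's: both verify $\varphi_1(\mathds{1})=\mathds{1}$ and then sum the components of $f \circ_i^{\mathrm{comp}} g$, reindexing the convolution sum $\sum_{k}\sum_{r+s=k+1} f_r \circ_i g_s$ as the full double sum $\sum_{r,s} f_r \circ_i g_s$ and invoking bilinearity of $\circ_i$ to match it with $\varphi_m(f) \circ_i \varphi_n(g)$. Your explicit remark that each pair $(r,s)$ occurs in exactly one component ($k = r+s-1$) is the same bookkeeping the paper performs implicitly.
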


\begin{proof}
For $f = (f_1, \ldots, f_m) \in \mathcal{O}^\mathrm{comp} (m)$, $g = (g_1, \ldots, g_n) \in \mathcal{O}^\mathrm{comp} (n)$ and $1 \leq i \leq m$, we have
\begin{align*}
\varphi (f \circ_i^\mathrm{comp} g) =~& \varphi \big( f_1 \circ_i g_1, \ldots, \underbrace{ \sum_{r+s = k+1} f_r \circ_i g_s}_{k\text{-th place}} , \ldots, f_m \circ_i g_n  \big) \\
=~& (f_1 \circ_i g_1) + \cdots + (\sum_{r+s = k+1} f_r \circ_i g_s) + \cdots + (f_m \circ_i g_n ) \\
=~& \sum_{k =1}^{m+n-1} \sum_{r+s = k+1} f_r \circ_i g_s \\
=~& (f_1 + \cdots + f_m) \circ_i (g_1 + \cdots + g_n) = \varphi_m (f) \circ_i \varphi_n (g).
\end{align*}
Finally, we have $\varphi_1 (\mathds{1}) = \mathds{1}$. Therefore, $\phi$ is a morphism of nonsymmetric operads.
\end{proof}

\medskip

Let $(\pi_1, \pi_2)$ be a compatible multiplication on the operad $\mathcal{O}$. Then it can be seen as a multiplication on the operad $\mathcal{O}^\mathrm{comp}$. Note that $\varphi : \mathcal{O}^\mathrm{comp} \rightarrow \mathcal{O}$ is a morphism of nonsymmetric operads (see Proposition \ref{comp-mor}) such that $\varphi ((\pi_1, \pi_2)) = \pi_1 + \pi_2$. Since $(\pi_1, \pi_2)$ is a multiplication on the operad $\mathcal{O}^\mathrm{comp}$ and $\pi_1 + \pi_2$ is a multiplication on the operad $\mathcal{O}$, it follows that $\varphi : \mathcal{O}^\mathrm{comp} \rightarrow \mathcal{O}$ is a morphism between multiplications from $(\pi_1, \pi_2)$ to $\pi_1 + \pi_2$. Therefore, as a consequence, $\varphi$ induces a morphism 
\begin{align*}
\varphi_\ast : H^\bullet_{(\pi_1, \pi_2)} (\mathcal{O}^\mathrm{comp}) \rightarrow H^\bullet_{\pi_1 + \pi_2} (\mathcal{O})
\end{align*}
from the cohomology induced by the compatible multiplication $(\pi_1, \pi_2)$ to the cohomology induced by the multiplication $\pi_1 + \pi_2$. This is infact a morphism between Gerstenhaber algebras.

\begin{remark}\label{remark-comp}
As mentioned in the introduction, nonsymmetric operads are only useful to study algebras whose defining identity/identities have no shufflings. To deal with algebras with shufflings, one need to consider symmetric operads. A symmetric operad is a nonsymmetric operad $\mathcal{O}$ equipped with certain actions of symmetric groups that are compatible with partial compositions. The endomorphism operad $\mathrm{End}_A$ (associated to a ${\bf k}$-module $A$) is a symmetric operad where the actions are given by shuffling of input entries. An algebra over a symmetric operad $\mathcal{O}$ is a {\bf k}-module $A$ equipped with a morphism $\mathcal{O} \rightarrow \mathrm{End}_A$ of symmetric operads. In particular, if $\mathcal{O}$ is nonsymmetric (consider as a symmetric operad with trivial actions of symmetric groups), Koszul and quadratic operad, an algebra over $\mathcal{O}$ can be equivalently described by multiplication on a suitable nonsymmetric operad constructed from $\mathcal{O}$ (see \cite{lod-val-book,das}). Thus, algebras over symmetric operads are more general notions than multiplications on nonsymmetric operads.

In \cite{bala} Balavoine observed that an algebra over a symmetric, binary and quadratic operad $\mathcal{O}$ can be regarded as a Maurer-Cartan element in a suitably graded Lie algebra. By considering compatible Maurer-Cartan elements in a graded Lie algebra, one can easily deal with compatible algebras over any symmetric, binary and quadratic operad.
\end{remark}

\section{Dendriform-multiplications and the operad $\mathcal{O}^\mathrm{Dend}$}\label{sec-4}
In this section, we first introduce a notion of dendriform-multiplication on a nonsymmetric operad $\mathcal{O}$. Such a multiplication splits a multiplication on $\mathcal{O}$. When we consider the endomorphism operad $\mathrm{End}_A$, a dendriform-multiplication corresponds to a dendriform algebra structure on $A$. Next, we construct a nonsymmetric operad $\mathcal{O}^\mathrm{Dend}$ whose ordinary multiplications are precisely dendriform-multiplications on $\mathcal{O}$. These ideas are further generalized to some other Loday-multiplications.

\subsection{Dendriform-multiplications}\label{subsec-dend-mul} 

\begin{defn}\label{defn-dend-mul}
Let $\mathcal{O} = ( \{  \mathcal{O}(n) \}_{n \geq 1}, \circ_i, \mathds{1})$ be a nonsymmetric operad. A {\bf dendriform-multiplication} on $\mathcal{O}$ consists of a pair $(\pi_\prec ,  \pi_\succ)$ of elements of $\mathcal{O}(2)$ satisfying
\begin{align}
\pi_\prec \circ_1 \pi_\prec =~& \pi_\prec \circ_2 (\pi_\prec + \pi_\succ), \label{dend-mul-1}\\
\pi_\prec \circ_1 \pi_\succ =~& \pi_\succ \circ_2 \pi_\prec,\\
\pi_\succ \circ_1 (\pi_\prec + \pi_\succ) =~& \pi_\succ \circ_2 \pi_\succ. \label{dend-mul-3}
\end{align}
\end{defn}

It follows from the above definition that neither $\pi_\prec$ nor $\pi_\succ$ is a multiplication on $\mathcal{O}$. However, if we add the identities (\ref{dend-mul-1})-(\ref{dend-mul-3}), we simply get that the sum $\pi_\mathrm{Tot} := \pi_\prec + \pi_\succ$ is a multiplication on $\mathcal{O}$. This is called the 'total' multiplication induced by the dendriform-multiplication.

\begin{remark}
Consider the endomorphism operad $\mathrm{End}_A$ associated to a ${\bf k}$-module $A$. Let $\pi_\prec, \pi_\succ \in \mathrm{End}_A (2)$ be two elements which correspond to ${\bf k}$-bilinear maps $\prec, \succ : A \otimes A \rightarrow A$. Then $(\pi_\prec, \pi_\succ)$ is a dendriform-multiplication on the operad $\mathrm{End}_A$ if and only if $(A, \prec, \succ)$ is a dendriform algebra. Thus, dendriform-multiplications are generalization of dendriform algebra structures.
\end{remark}

\begin{defn}
Let $\pi \in \mathcal{O}(2)$ be a multiplication on a nonsymmetric operad $\mathcal{O}$. An element $R \in \mathcal{O}(1)$ is said to be a {\bf Rota-Baxter element} with respect to the multiplication $\pi$ if
\begin{align}\label{rb-ele}
( \pi \circ_2 R) \circ_1 R = R \circ_1 (\pi \circ_1 R ~+~ \pi \circ_2 R ).
\end{align}
\end{defn}

Rota-Baxter elements are generalization of the Rota-Baxter operator (see \cite{aguiar} for instance). Like an ordinary Rota-Baxter operator induces a dendriform algebra structure \cite{aguiar}, a Rota-Baxter element induces a dendriform-multiplication.

\begin{prop}
Let $R \in \mathcal{O}(1)$ be a Rota-Baxter element with respect to a multiplication $\pi$. Then the pair $(\pi_\prec, \pi_\succ)$ is a dendriform-multiplication on $\mathcal{O}$, where  $\pi_\prec = \pi \circ_2 R$ and $\pi_\succ = \pi \circ_1 R$.
\end{prop}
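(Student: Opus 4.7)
The plan is to verify each of the three dendriform-multiplication identities (\ref{dend-mul-1})--(\ref{dend-mul-3}) by substituting $\pi_\prec = \pi \circ_2 R$ and $\pi_\succ = \pi \circ_1 R$ and then repeatedly applying the partial composition axioms (\ref{pc-iden}) to bring both sides of each equation into a canonical form built out of one copy of $\pi \circ_i \pi$ with two outer insertions of $R$. Because $R \in \mathcal{O}(1)$ has arity $1$, every partial composition with $R$ merely shifts the remaining indices by zero, which keeps the bookkeeping manageable. The two tools that close each argument are the multiplication equation $\pi \circ_1 \pi = \pi \circ_2 \pi$ and the Rota-Baxter identity (\ref{rb-ele}).

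For identity (\ref{dend-mul-1}), I would rewrite the left-hand side $(\pi \circ_2 R) \circ_1 (\pi \circ_2 R)$ via one use of the parallel form of (\ref{pc-iden}) followed by one use of the sequential form, producing the normal form $((\pi \circ_1 \pi) \circ_2 R) \circ_3 R$. On the right-hand side, the sequential form of (\ref{pc-iden}) first pulls an outer $\pi \circ_2 (-)$ out of both summands, so that the inner bracket becomes exactly $R \circ_1 (\pi \circ_1 R + \pi \circ_2 R)$; the Rota-Baxter identity (\ref{rb-ele}) replaces this by $(\pi \circ_2 R) \circ_1 R$, and further applications of (\ref{pc-iden}) put the full right-hand side into the normal form $((\pi \circ_2 \pi) \circ_2 R) \circ_3 R$. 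Equality with the left-hand side is then immediate from $\pi \circ_1 \pi = \pi \circ_2 \pi$. Identity (\ref{dend-mul-3}) is handled by the mirror argument: its left-hand side expands as $\pi \circ_1 (R \circ_1 (\pi \circ_1 R + \pi \circ_2 R))$, the Rota-Baxter identity again collapses the inner sum to $(\pi \circ_2 R) \circ_1 R$, and both sides reduce respectively to $((\pi \circ_1 \pi) \circ_1 R) \circ_2 R$ and $((\pi \circ_2 \pi) \circ_2 R) \circ_1 R$, which agree after $\pi \circ_1 \pi = \pi \circ_2 \pi$ and one more parallel swap of the outer $R$-compositions.

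Identity (\ref{dend-mul-2}) is the easiest and should not require the Rota-Baxter relation at all: two applications of the parallel form of (\ref{pc-iden}) (together with the sequential form to absorb the inner $R$) reduce the two sides to $((\pi \circ_1 \pi) \circ_1 R) \circ_3 R$ and $((\pi \circ_2 \pi) \circ_3 R) \circ_1 R$, and these agree by $\pi \circ_1 \pi = \pi \circ_2 \pi$ followed by one more parallel-composition swap of the two outer $R$-insertions. The only real obstacle in the whole argument is purely combinatorial: at every step one must select the correct branch of (\ref{pc-iden}) (sequential versus parallel) and re-index the outer composition properly, remembering that an insertion of $\pi$ raises the ambient arity by one while an insertion of $R$ leaves it unchanged. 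Once the index tracking is set up, the substantive mathematical content collapses to one invocation of the Rota-Baxter identity inside each of (\ref{dend-mul-1}) and (\ref{dend-mul-3}) together with three uses of the multiplicativity $\pi \circ_1 \pi = \pi \circ_2 \pi$.
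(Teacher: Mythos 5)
Your proposal is correct and takes essentially the same route as the paper: a direct verification of the three dendriform identities using the partial-composition axioms, one application of the Rota--Baxter identity in each of (\ref{dend-mul-1}) and (\ref{dend-mul-3}), and the multiplication property of $\pi$. The only (cosmetic) difference is that you reduce both sides to a common normal form and thereby make explicit each use of $\pi \circ_1 \pi = \pi \circ_2 \pi$, which the paper's chain of equalities absorbs silently (e.g.\ in its very first step $(\pi \circ_2 R) \circ_1 (\pi \circ_2 R) = \pi \circ_2 \big((\pi \circ_2 R) \circ_1 R\big)$, which already requires associativity of $\pi$ and not just the operad axioms).
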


\begin{proof}
The proof relies on the use of (\ref{pc-iden}) and the identity (\ref{rb-ele}). We have
\begin{align*}
\pi_\prec \circ_1 \pi_\prec =~& (\pi \circ_2 R) \circ_1 (\pi \circ_2 R) \\
=~& \pi \circ_2 ((\pi \circ_2 R) \circ_1 R) \\
=~& \pi \circ_2  \big( R \circ_1 (\pi \circ_1 R ~+~ \pi \circ_2 R)  \big)\\
=~& (\pi \circ_2 R) \circ_2 (\pi \circ_1 R ~+~ \pi \circ_2 R) 
= \pi_\prec \circ_2 (\pi_\prec + \pi_\succ),
\end{align*} 
\begin{align*}
\pi_\prec \circ_1 \pi_\succ =~& (\pi \circ_2 R) \circ_1 (\pi \circ_1 R) \\
=~& (\pi \circ_1 R) \circ_2 (\pi \circ_2 R) = \pi_\succ \circ_2 \pi_\prec
\end{align*}
and
\begin{align*}
\pi_\succ \circ_1 (\pi_\prec + \pi_\succ) =~& (\pi \circ_1 R) \circ_1 (\pi \circ_1 R ~+~ \pi \circ_2 R) \\
=~& \pi \circ_1 (R \circ_1 (\pi \circ_1 R ~+~ \pi \circ_2 R)) \\
=~& \pi \circ_1 ((\pi \circ_2 R) \circ_1 R) \\
=~& (\pi \circ_1 R) \circ_2 (\pi \circ_1 R) = \pi_\succ \circ_2 \pi_\succ.
\end{align*}
Hence the result follows.
\end{proof}

In the following, we construct the operad $\mathcal{O}^\mathrm{Dend}$. For that, we need some combinatorial maps on the subsets of natural numbers. Let $C_n$ $(n \geq 1)$ be the set of first $n$ natural numbers. Since we will not allow additions (or any operations) on the elements of $C_n$, we usually write the elements of $C_n$ inside a paranthesis, i.e., we write $C_n = \{ [1], [2], \ldots, [n] \}$. For any $m,n \geq 1$ and $1 \leq i \leq m$, let us put the elements of $C_{m+n-1}$ into $m$ many boxes as follows:


\begin{align*}
\framebox{$[1]$} \quad \framebox{$[2]$} \quad \cdots \quad \framebox{$[i-1]$} \quad \framebox{$[i]$\quad$\cdots$ $[i+n-2]$ ~ $[i+n-1]$} \quad \framebox{$[i+n]$} \quad \cdots \quad \framebox{$[m+n-1]$}~.
\end{align*}

\medskip

\noindent In other words, we put exactly one element in the first $i-1$ boxes, $n$ elements in the $i$-th box and again exactly one element in the last $m-i$ many boxes. Moreover, the elements are situated in the increasing order. We define maps $R_0  (m;1, \ldots, n, \ldots, 1) : C_{m+n-1} \rightarrow C_m$ and  $R_i (m;1, \ldots, n, \ldots, 1) : C_{m+n-1} \rightarrow {\bf k}[C_n]$ as follows: for $[r] \in C_{m+n-1}$,
\begin{align}
R_0  (m;1, \ldots, n, \ldots, 1) [r] :=~& [k] ~~~ \text{ if } [r] \text{ lies in the $k$-th box}, \label{r0}\\
R_i  (m;1, \ldots, n, \ldots, 1) [r] :=~& \begin{cases} [r-i+1] ~~~ \text{ if } [r] \text{ lies in the $i$-th box}, \\
[1] + \cdots + [n] ~~~~ \text{ otherwise}.
\end{cases} \label{ri}
\end{align}

Let $\mathcal{O} = (\{ \mathcal{O}(n) \}_{n \geq 1}, \circ_i, \mathds{1})$ be a nonsymmetric operad. For each $n \geq 1$, we define a ${\bf k}$-module 
\begin{align*}
\mathcal{O}^\mathrm{Dend} (n) := {\bf k}[C_n] \otimes \mathcal{O}(n).
\end{align*}
 Any element $f \in \mathcal{O}^\mathrm{Dend}(n)$ can be regarded as an $n$-tuple $f = (f^{[1]}, \ldots, f^{[n]})$, where $f^{[1]}, \ldots, f^{[n]} \in \mathcal{O}(n)$. This can be seen by the following identification
\begin{align*}
f = (f^{[1]}, \ldots, f^{[n]}) \leftrightsquigarrow ~[1] \otimes f^{[1]} ~+ \cdots +~ [n] \otimes f^{[n]} \in \mathcal{O}^\mathrm{Dend}(n).
\end{align*}
(Note that, the {\bf k}-modules $\mathcal{O}^\mathrm{Dend} (n)$ and $\mathcal{O}^\mathrm{comp} (n)$ are isomorphic via the above identification.) For any $m, n \geq 1$ and $1 \leq i \leq m$, we define ${\bf k}$-bilinear maps 
\begin{align}
\circ_i^\mathrm{Dend} : \mathcal{O}^\mathrm{Dend}(m) \otimes \mathcal{O}^\mathrm{Dend}(n) \rightarrow \mathcal{O}^\mathrm{Dend}(m+n-1) ~~ \text{ by } \nonumber \\
(f \circ_i^\mathrm{Dend} g)^{[r]} := f_{ R_0  (m;1, \ldots, n, \ldots, 1) [r]} \circ_i g_{R_i  (m;1, \ldots, n, \ldots, 1) [r]}, \label{split-pc}
\end{align}
for $f = (f^{[1]}, \ldots, f^{[m]}) \in \mathcal{O}^\mathrm{Dend} (m)$ and $g = (g^{[1]}, \ldots, g^{[n]}) \in \mathcal{O}^\mathrm{Dend} (n)$. Moreover, note that the element $\mathds{1} \in \mathcal{O}(1)$ can be regarded as an element $\mathds{1} = \mathds{1}^{[1]} \in \mathcal{O}^\mathrm{Dend}(1)$. With all these notations, we have the following.

\begin{thm}\label{split-operad-thm}
The triple $\mathcal{O}^\mathrm{Dend} = (\{ \mathrm{O}^\mathrm{Dend} (n) \}_{n \geq 1}, \circ_i^\mathrm{Dend}, \mathds{1})$ is a nonsymmetric operad. Moreover, an element $\pi = (\pi^{[1]}, \pi^{[2]}) \in \mathcal{O}^\mathrm{Dend} (2)$ is a multiplication on the operad $\mathcal{O}^\mathrm{Dend}$ if and only if the pair $(\pi^{[1]}, \pi^{[2]})$ is a dendriform-multiplication on the operad $\mathcal{O}$. 
\end{thm}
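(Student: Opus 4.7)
The proof splits naturally into two halves. First, I would verify that $\mathcal{O}^\mathrm{Dend}$ satisfies the operad axioms (\ref{pc-iden}) together with the unit axioms. The unit axiom is essentially automatic: for $n = 1$ the maps $R_0$ and $R_i$ in (\ref{r0})--(\ref{ri}) are forced to send every $[r]$ to $[1]$ or $[r]$ respectively, so $f \circ_i^\mathrm{Dend} \mathds{1}^{[1]}$ and $\mathds{1}^{[1]} \circ_1^\mathrm{Dend} f$ collapse to componentwise applications of the unit axiom in $\mathcal{O}$. The associativity axioms require genuine work.

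The strategy for the sequential associativity $(f \circ_i^\mathrm{Dend} g) \circ_{i+j-1}^\mathrm{Dend} h = f \circ_i^\mathrm{Dend} (g \circ_j^\mathrm{Dend} h)$ is to interpret both sides via the hierarchical box picture. Reading the formula (\ref{split-pc}) iteratively, the $[r]$-component on each side becomes a twofold partial composition in $\mathcal{O}$, where the indexing data (which superscripts of $f$, $g$, $h$ to use) is determined by locating $[r]$ in a two-level partition of $C_{m+n+p-2}$. I would argue that both orders of nesting produce the same partition data: for labels $[r]$ lying inside the block eventually occupied by $h$, one gets the identity $f^{R_0} \circ_i (g^{R_0'} \circ_j h^{R_j'}) = (f^{R_0} \circ_i g^{R_0'}) \circ_{i+j-1} h^{R_j'}$ via (\ref{pc-iden}); for labels outside the $h$-block, the formulas produce sums like $[1] + \cdots + [p]$ on the relevant factor, and (\ref{pc-iden}) again matches both sides. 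The parallel associativity $(f \circ_i^\mathrm{Dend} g) \circ_{j+n-1}^\mathrm{Dend} h = (f \circ_j^\mathrm{Dend} h) \circ_i^\mathrm{Dend} g$ follows by an analogous but case-heavier argument, separating whether $[r]$ lies in the $g$-block, the $h$-block, or outside both. The main obstacle here is the combinatorial bookkeeping for the parallel case; no new ideas are needed beyond unwinding (\ref{r0})--(\ref{ri}) and invoking (\ref{pc-iden}), but the case split must be carried out carefully.

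Once $\mathcal{O}^\mathrm{Dend}$ is established as an operad, the second half is a direct calculation. For $\pi = (\pi^{[1]}, \pi^{[2]}) \in \mathcal{O}^\mathrm{Dend}(2)$, I would compute $\pi \circ_1^\mathrm{Dend} \pi$ and $\pi \circ_2^\mathrm{Dend} \pi$ as triples in $\mathcal{O}^\mathrm{Dend}(3)$. Using the box decomposition $\framebox{$[1][2]$}\,\framebox{$[3]$}$ for $\circ_1^\mathrm{Dend}$, the three components work out to $\pi^{[1]} \circ_1 \pi^{[1]}$, $\pi^{[1]} \circ_1 \pi^{[2]}$, and $\pi^{[2]} \circ_1 (\pi^{[1]} + \pi^{[2]})$. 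Using the decomposition $\framebox{$[1]$}\,\framebox{$[2][3]$}$ for $\circ_2^\mathrm{Dend}$, the three components are $\pi^{[1]} \circ_2 (\pi^{[1]} + \pi^{[2]})$, $\pi^{[2]} \circ_2 \pi^{[1]}$, and $\pi^{[2]} \circ_2 \pi^{[2]}$. Equating the two triples component by component and writing $\pi_\prec := \pi^{[1]}$, $\pi_\succ := \pi^{[2]}$, one recovers exactly the three defining identities (\ref{dend-mul-1})--(\ref{dend-mul-3}) of a dendriform-multiplication, proving the equivalence.
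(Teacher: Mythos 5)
Your proposal is correct and follows essentially the same route as the paper: a case analysis on the position of the label $[r]$ relative to the boxes, unwinding (\ref{r0})--(\ref{ri}) and invoking (\ref{pc-iden}) for the operad axioms, followed by the componentwise computation of $\pi \circ_1^\mathrm{Dend} \pi - \pi \circ_2^\mathrm{Dend} \pi$, whose three components you identify exactly as the paper does. No substantive difference.
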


\begin{proof}
Let $f = (f^{[1]}, \ldots, f^{[m]}) \in \mathcal{O}^\mathrm{Dend} (m)$, $g = (g^{[1]}, \ldots, g^{[n]}) \in \mathcal{O}^\mathrm{Dend} (n)$ and $h = (h^{[1]}, \ldots, h^{[p]}) \in \mathcal{O}^\mathrm{Dend} (p)$. Let $1 \leq i \leq m$ and $1 \leq j \leq n$. Then for any $[r] \in C_{m+n+p-2}$ with $1 \leq r \leq i-1$, we have
\begin{align*}
((f \circ_i^\mathrm{Dend} g) \circ_{i+j-1}^\mathrm{Dend} h )^{[r]} = (f \circ_i^\mathrm{Dend} g)^{[r]} \circ_{i+j-1} h^{[1] + \cdots + [p]} = ( f^{[r]} \circ_i g^{[1]+\cdots+[n]}) \circ_{i+j-1} h^{[1] + \cdots + [p]}.
\end{align*}
On the other hand,
\begin{align*}
&(f \circ_i^\mathrm{Dend} (g \circ_j^\mathrm{Dend} h))^{[r]} \\
&= f^{[r]} \circ_i (g \circ_j^\mathrm{Dend} h)^{[1] + \cdots + [n+p-1]} \\
&= \sum_{1 \leq s \leq j-1} f^{[r]} \circ_i (g \circ_j^\mathrm{Dend} h)^{[s]} ~+~ \sum_{j \leq s \leq j+p-1} f^{[r]} \circ_i (g \circ_j^\mathrm{Dend} h)^{[s]} ~+~ \sum_{j+p \leq s \leq n+p-1} f^{[r]} \circ_i (g \circ_j^\mathrm{Dend} h)^{[s]} \\
&= \sum_{1 \leq s \leq j-1} f^{[r]} \circ_i (g^{[s]} \circ_j h^{[1] + \cdots + [p]}) ~+~ f^{[r]} \circ_i (g^{[j]} \circ_j h^{[1] + \cdots + [p]}) ~+~ \sum_{j+1 \leq s \leq n} f^{[r]} \circ_i (g^{[s]} \circ_j h^{[1] + \cdots + [p]}) \\
&= f^{[r]} \circ_i (g^{[1] + \cdots + [n]} \circ_j h^{[1] + \cdots + [p]}) \\
&= (f^{[r]} \circ_i g^{[1] + \cdots + [n]}) \circ_{i+j-1} h^{[1] + \cdots + [p]}.
\end{align*}
This shows that $((f \circ_i^\mathrm{Dend} g) \circ_{i+j-1}^\mathrm{Dend} h )^{[r]} = (f \circ_i^\mathrm{Dend} (g \circ_j^\mathrm{Dend} h))^{[r]} $, for $[r] \in C_{m+n+p-2}$ with $1 \leq r \leq i-1$. In a similar way, we can show that this identity holds when $r$ belongs to either of the following intervals: $i \leq r \leq i+j-2$, $i+j-1 \leq r \leq i+j+p-2$, $i+j+p-1 \leq r \leq i+n+p-2$ and $i +n +p-1 \leq r \leq m+n+p-2$. This concludes that
\begin{align*}
(f \circ_i^\mathrm{Dend} g) \circ_{i+j-1}^\mathrm{Dend} h  = f \circ_i^\mathrm{Dend} (g \circ_j^\mathrm{Dend} h), ~ \text{ for } 1 \leq i \leq m \text{ and } 1 \leq j \leq n.
\end{align*}
Similarly, we can show that $(f \circ_i^\mathrm{Dend} g) \circ_{j+n-1}^\mathrm{Dend} h = (f \circ_j^\mathrm{Dend} h) \circ_i^\mathrm{Dend} g$, for $1 \leq i < j \leq m$. Finally, for $f = (f^{[1]}, \ldots, f^{[m]}) \in \mathcal{O}^\mathrm{Dend} (m)$, $1 \leq i \leq m$ and $[r] \in C_m$, we have
\begin{align*}
(f \circ_i^\mathrm{Dend} \mathds{1})^{[r]} = f^{[r]} \circ_i \mathds{1} = f^{[r]}  ~~ \text{ and } ~~ (\mathds{1} \circ_i^\mathrm{Dend} f)^{[r]} = \mathds{1} \circ_1 f^{[r]} = f^{[r]}.
\end{align*}
Therefore, we have $f \circ_i^\mathrm{Dend} \mathds{1} = \mathds{1} \circ_i^\mathrm{Dend} f = f$, for $1 \leq i \leq m$. Thus the triple $(\{ \mathcal{O}^\mathrm{Dend} (n) \}_{n \geq 1} , \circ_i^\mathrm{Dend} , \mathds{1})$ is a nonsymmetric operad.

\medskip

For any $\pi = (\pi^{[1]}, \pi^{[2]}) \in \mathcal{O}^\mathrm{Dend} (2)$, we have $(\pi \circ_1^\mathrm{Dend} \pi - \pi \circ_2^\mathrm{Dend} \pi) \in \mathcal{O}^\mathrm{Dend} (3)$. Moreover, using the partial compositions (\ref{split-pc}), we have
\begin{align*}
(\pi \circ_1^\mathrm{Dend} \pi - \pi \circ_2^\mathrm{Dend} \pi)^{[1]} =~& \pi^{[1]} \circ_1 \pi^{[1]} ~-~ \pi^{[1]} \circ_2 (\pi^{[1]} + \pi^{[2]}),\\
(\pi \circ_1^\mathrm{Dend} \pi - \pi \circ_2^\mathrm{Dend} \pi)^{[2]} =~& \pi^{[1]} \circ_1 \pi^{[2]} ~-~ \pi^{[2]} \circ_2 \pi^{[1]},\\
(\pi \circ_1^\mathrm{Dend} \pi - \pi \circ_2^\mathrm{Dend} \pi)^{[3]} =~& \pi^{[2]} \circ_1 (\pi^{[1]} + \pi^{[2]}) ~-~ \pi^{[2]} \circ_2 \pi^{[2]}.
\end{align*}
This shows that $\pi \circ_1^\mathrm{Dend} \pi - \pi \circ_2^\mathrm{Dend} \pi = 0$ if and only if $({\pi^{[1]}, \pi^{[2]}})$ forms a dendriform-multiplication on the operad $\mathcal{O}$. This completes the proof.
\end{proof}

Let $\mathcal{O}$ be a nonsymmetric operad and $(\pi_\prec, \pi_\succ)$ be a dendriform-multiplication on $\mathcal{O}$. It follows from Theorem \ref{split-operad-thm} that the element $ \pi = (\pi_\prec, \pi_\succ) \in \mathcal{O}^\mathrm{Dend}(2)$ is a multiplication on the operad $\mathcal{O}^\mathrm{Dend}$. Hence there is a cochain complex $\big( C^\bullet_{(\pi_\prec, \pi_\succ)} (\mathcal{O}^\mathrm{Dend}),   \delta_{(\pi_\prec, \pi_\succ)} \big)$ given by
\begin{align*}
C^n_{(\pi_\prec, \pi_\succ)} (\mathcal{O}^\mathrm{Dend}) := \mathcal{O}^\mathrm{Dend}(n) = {\bf k}[C_n] \otimes \mathcal{O}(n), \text{ for } n \geq 1 ~~~ \text{ and } ~~~ \delta_{(\pi_\prec, \pi_\succ)} = \llbracket (\pi_\prec, \pi_\succ), - \rrbracket_{\mathcal{O}^\mathrm{Dend}},
\end{align*}
where $\llbracket ~, ~ \rrbracket_{\mathcal{O}^\mathrm{Dend}}$ denotes the degree $-1$ graded Lie bracket on the graded ${\bf k}$-module $\mathcal{O}^\mathrm{Dend} (\bullet) = \oplus_{n \geq 1} \mathcal{O}^\mathrm{Dend} (n)$ induced by the operad $\mathcal{O}^\mathrm{Dend}$. The corresponding cohomology groups are called the {\bf cohomology induced by the dendriform-multiplication $(\pi_\prec, \pi_\succ)$} and they are denoted by $H^\bullet_{(\pi_\prec, \pi_\succ)} (\mathcal{O}^\mathrm{Dend})$. The graded space of cohomology also carries a Gerstenhaber structure as it is induced by a multiplication on the operad $\mathcal{O}^\mathrm{Dend}.$

\medskip

Let $\mathcal{O}$ be a nonsymmetric operad. Consider the operad $\mathcal{O}^\mathrm{Dend}$ given in Theorem \ref{split-operad-thm}. For each $n \geq 1$, define a ${\bf k}$-linear map $\varphi_n : \mathcal{O}^\mathrm{Dend} (n) \rightarrow \mathcal{O}(n)$ by
\begin{align*}
\varphi_n (f) = f^{[1]} + \cdots + f^{[n]}, \text{ for } f = (f^{[1]}, \ldots, f^{[n]}) \in \mathcal{O}^\mathrm{Dend}(n).
\end{align*}
Then we have the following.

\begin{prop}\label{dend-tot-mor}
The collection $\varphi = \{ \varphi_n : \mathcal{O}^\mathrm{Dend}(n) \rightarrow \mathcal{O}(n) \}_{n \geq 1}$ defines a morphism $\varphi : \mathcal{O}^\mathrm{Dend} \rightarrow \mathcal{O}$ of nonsymmetric operads. In particular, if $(\pi_\prec, \pi_\succ)$ is a dendriform-multiplication on the operad $\mathcal{O}$, then $\varphi$ induces a Gerstenhaber algebra morphism $\varphi_\ast : H^\bullet_{(\pi_\prec, \pi_\succ)} (\mathcal{O}^\mathrm{Dend}) \rightarrow H^\bullet_{\pi_{\mathrm{Tot}}} (\mathcal{O})$ between cohomology groups, where $\pi_\mathrm{Tot}  = \pi_\prec + \pi_\succ$ is the total multiplication.
\end{prop}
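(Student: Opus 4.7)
The plan is to break the statement into two parts. The first claim is a direct verification that $\varphi = \{\varphi_n\}_{n\geq 1}$ is a morphism of nonsymmetric operads; the second claim then follows formally by feeding $\varphi$ into the general machinery about morphisms between multiplications established at the end of Section~\ref{sec-2}.

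For the first part, I would check the two defining conditions of an operad morphism. The identity condition $\varphi_1(\mathds{1}) = \mathds{1}$ is immediate from the definition, since $\mathds{1} = \mathds{1}^{[1]} \in \mathcal{O}^\mathrm{Dend}(1)$ and $\varphi_1$ is then just the identity on $\mathcal{O}(1)$. The compatibility with partial compositions is the calculation
\[
\varphi_{m+n-1}(f \circ_i^\mathrm{Dend} g) = \varphi_m(f)\circ_i \varphi_n(g),
\]
which I would verify by splitting the sum $\sum_{r=1}^{m+n-1}(f \circ_i^\mathrm{Dend} g)^{[r]}$ according to which of the three ranges $1\le r \le i-1$, $i\le r\le i+n-1$, or $i+n\le r\le m+n-1$ the index $r$ lies in. Using the explicit formulas (\ref{r0})--(\ref{ri}) for $R_0(m;1,\ldots,n,\ldots,1)$ and $R_i(m;1,\ldots,n,\ldots,1)$, the first range contributes $\sum_{k=1}^{i-1}\sum_{l=1}^{n} f^{[k]}\circ_i g^{[l]}$, the middle range contributes $\sum_{l=1}^{n} f^{[i]}\circ_i g^{[l]}$, and the last range contributes $\sum_{k=i+1}^{m}\sum_{l=1}^{n} f^{[k]}\circ_i g^{[l]}$. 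Summing these three ranges collapses to $\bigl(\sum_k f^{[k]}\bigr)\circ_i \bigl(\sum_l g^{[l]}\bigr) = \varphi_m(f)\circ_i \varphi_n(g)$, as required.

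For the second part, once the first part is in hand, observe that
\[
\varphi_2\bigl((\pi_\prec, \pi_\succ)\bigr) = \pi_\prec + \pi_\succ = \pi_\mathrm{Tot}.
\]
Since $(\pi_\prec, \pi_\succ)$ is a multiplication on $\mathcal{O}^\mathrm{Dend}$ by Theorem~\ref{split-operad-thm} and $\pi_\mathrm{Tot}$ is a multiplication on $\mathcal{O}$ (as noted after Definition~\ref{defn-dend-mul}), the operad morphism $\varphi$ is in fact a morphism between multiplications from $(\pi_\prec,\pi_\succ)$ to $\pi_\mathrm{Tot}$. The induced map of cochain complexes and the resulting Gerstenhaber algebra morphism $\varphi_\ast : H^\bullet_{(\pi_\prec,\pi_\succ)}(\mathcal{O}^\mathrm{Dend}) \to H^\bullet_{\pi_\mathrm{Tot}}(\mathcal{O})$ are then precisely the general construction recalled at the end of Section~\ref{sec-2}.

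The only non-routine step is the bookkeeping in the combinatorial calculation of the first part: one must be careful to match the intervals associated with the combinatorial maps $R_0$ and $R_i$ to the correct pairs $(k,l)$ so that every cross-term $f^{[k]}\circ_i g^{[l]}$ on the right-hand side is produced exactly once on the left. Beyond this bookkeeping the argument is formal.
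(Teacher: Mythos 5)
Your proposal is correct and follows essentially the same route as the paper: the same three-range decomposition of $\sum_r (f\circ_i^{\mathrm{Dend}} g)^{[r]}$ using the combinatorial maps $R_0$ and $R_i$, and the same observation that $\varphi_2((\pi_\prec,\pi_\succ)) = \pi_{\mathrm{Tot}}$ reduces the second claim to the general functoriality recalled at the end of Section~\ref{sec-2}. The only difference is cosmetic: you write the cross-terms $f^{[k]}\circ_i g^{[l]}$ out as double sums where the paper uses the shorthand $g^{[1]+\cdots+[n]}$.
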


\begin{proof}
Let $f = (f^{[1]}, \ldots, f^{[m]}) \in \mathcal{O}^\mathrm{Dend} (m)$, $g = (g^{[1]}, \ldots, g^{[n]}) \in \mathcal{O}^\mathrm{Dend} (n)$ and $1 \leq i \leq m$. Then
\begin{align*}
\varphi_{m+n-1} (f \circ_i^\mathrm{Dend} g) =~& (f \circ_i^\mathrm{Dend} g)^{[1]} + \cdots + (f \circ_i^\mathrm{Dend} g)^{[m+n-1]} \\
=~& \sum_{1 \leq r \leq i-1} (f \circ_i^\mathrm{Dend} g)^{[r]} + \sum_{i \leq r \leq i+n-1} (f \circ_i^\mathrm{Dend} g)^{[r]} + \sum_{i+n \leq r \leq m+n-1} (f \circ_i^\mathrm{Dend} g)^{[r]} \\
=~& \sum_{1 \leq r \leq i-1} f^{[r]} \circ_i g^{[1] + \cdots + [n]} ~+~ f^{[i]} \circ_i g^{[1] + \cdots + [n]} ~+~ \sum_{i+1 \leq r \leq m} f^{[r]} \circ_i g^{[1] + \cdots + [n]} \\
=~& f^{[1] + \cdots + [m]} \circ_i g^{[1] + \cdots + [n]} = \varphi_m (f) \circ_i \varphi_n (g).
\end{align*}
Moreover, we have $\varphi_1 (\mathds{1}) = \mathds{1}$. Hence $\varphi$ is a morphism of nonsymmetric operads.

The last part follows as $\varphi_2 ((\pi_\prec, \pi_\succ)) = \pi_\prec + \pi_\succ = \pi_\mathrm{Tot}.$
\end{proof}

\begin{remark}
Let $\mathcal{O}$ be the endomorphism operad $\mathrm{End}_A$ associated to the ${\bf k}$-module $A$. Let $(\pi_\prec, \pi_\succ)$ be a dendriform-multiplication on the operad $\mathrm{End}_A$ which corresponds to the dendriform algebra structure $(A, \prec, \succ).$ Then it is easy to see that the cochain complex induced by the dendriform-multiplication $(\pi_\prec, \pi_\succ)$ coincides with the standard cochain complex for the dendriform algebra $(A, \prec, \succ)$ given in \cite{lod-val-book,A4,yau}. In this sense, the cohomology induced by dendriform-multiplication generalizes the cohomology of a dendriform algebra. Moreover, it follows from Proposition \ref{dend-tot-mor} that there is a morphism from the cohomology of a dendriform algebra to the Hochschild cohomology of the total associative algebra.
\end{remark}

\subsection{Other types of split multiplications}

In Subsection \ref{subsec-dend-mul}, we defined dendriform-multiplication on a nonsymmetric operad $\mathcal{O}$. It is given by a pair of two elements of $\mathcal{O}(2)$ ~(neither of them is a multiplication on $\mathcal{O}$) satisfying some set of relations which imply that their sum is a multiplication. Thus, dendriform-multiplications on $\mathcal{O}$ can be regarded as a splitting of multiplications on $\mathcal{O}$ in the same way dendriform algebras are splitting of associative algebras. Besides dendriform algebras, J.-L. Loday and his collaborators introduced a few other algebras (e.g. diassociative, triassociative, tridendriform, quadri algebras etc.) in their study of planar trees, shuffles and other combinatorial objects \cite{loday,loday-ronco,loday-aguiar}. All these algebras are Loday-algebras. Moreover, these algebras are splitting of associative algebras in the sense that the sum of the defining operations of these algebras forms associative product. 

Let `Lod' denotes a fixed type among these Loday-algebras  (e.g. diassociative, triassociative, dendriform, tridendriform, quadri etc.). Then it has been observed in \cite{das,yau} that there is a collection of nonempty sets $\{ U_n \}_{n \geq 1}$ and some `nice' combinatorial maps 
\begin{align*}
R_0 (m;1, \ldots, n, \ldots, 1) : U_{m+n-1} \rightarrow U_m ~~ \text{ and }~~ R_i (m;1, \ldots, n, \ldots, 1) : U_{m+n-1} \rightarrow {\bf k}[C_n],
\end{align*}
for $m, n \geq 1$ and $1 \leq i \leq m$ which makes the collection of {\bf k}-modules
\begin{align}\label{end-rep}
\mathrm{End}_A^\mathrm{Lod} (n) := {\bf k}[U_n] \otimes \mathrm{End}_A (n), \text{ for } n \geq 1
\end{align}
with partial compositions
\begin{align*}
(f \circ_i^\mathrm{Lod} g)^{[r]} = f^{R_0 (m;1, \ldots, n, \ldots, 1)[r]} \circ_i g^{R_i (m;1, \ldots, n, \ldots, 1)[r]}, \text{ for } f \in \mathrm{End}_A^\mathrm{Lod} (m),~ g \in \mathrm{End}_A^\mathrm{Lod} (n)
\end{align*}
into a nonsymmetric operad. For the above Loday-algebras, the corresponding sets $\{ U_n \}_{n \geq 1}$ are listed below:

\medskip

\begin{center}
\begin{tabular}{l|l}
Types of algebras & $U_n$ \\ \hline 
Diassociative algebras & $PBT_n$ (set of planar binary trees with $n$ vertices) \\
Triassociative algebras & $PB_n$ (set of planar trees with $n$ vertices) \\
Dendriform algebras & $C_n$ \\
Tridendriform algebras & $P_n$ (the set of nonempty subsets of $\{1, \ldots, n \}$) \\
Quadri algebras & $C_n \times C_n$.
\end{tabular}
\end{center}

\medskip

\noindent The combinatorial maps $\{ R_0, R_i \}$ for these Loday-algebras are explicitly defined in \cite{das} (In the case of dendriform algebras, these maps are given in (\ref{r0}), (\ref{ri})). A Lod-algebra structure on $A$ is equivalent to multiplication on the operad $\mathrm{End}_A^\mathrm{Lod}$. This idea can be easily generalized by replacing the endomorphism operad $\mathrm{End}_A$ in (\ref{end-rep}) by any nonsymmetric operad.

Let $\mathcal{O}$ be a nonsyymetric operad. For any fixed type `Lod' of the above Loday-algebras, we define a nonsymmetric operad $\mathcal{O}^\mathrm{Lod} = (\{ \mathcal{O}^\mathrm{Lod} (n) \}_{n \geq 1}, \circ_i^\mathrm{Lod}, \mathds{1})$ as follows
\begin{align*}
\mathcal{O}^\mathrm{Lod} (n) = {\bf k}[U_n] \otimes \mathcal{O}(n) ~~~ \text{ and } ~~~ (f\circ_i^\mathrm{Lod} g)^{[r]} = f^{R_0 (m;1, \ldots, n, \ldots, 1)[r]} \circ_i g^{R_i (m;1, \ldots, n, \ldots, 1)[r]}.
\end{align*}

\begin{defn}
A {\bf Lod-multiplication} on $\mathcal{O}$ is a multiplication on the operad $\mathcal{O}^\mathrm{Lod}$.
\end{defn}

When $\mathcal{O} = \mathrm{Dend}$, the type of dendriform algebras, the operad $\mathcal{O}^\mathrm{Dend}$ coincides with the one given in the previous subsection. Hence a Dend-multiplication on $\mathcal{O}$ is simply a dendriform-multiplication on $\mathcal{O}$ defined in Definition \ref{defn-dend-mul}.

By writing down the descriptions of the operad $\mathcal{O}^\mathrm{Lod}$ for various types `Lod' of Loday-algebras, one can explicitly define Lod-multiplications.  Following the sets $\{ U_n \}_{n \geq 1}$ and combinatorial maps $\{ R_0, R_i \}$ for Lod = TriDend, the type of tridendriform algebras, one gets the operad $\mathcal{O}^\mathrm{TriDend}$. A multiplication on the operad $\mathcal{O}^\mathrm{TriDend}$ is called a tridendriform-multiplication on $\mathcal{O}$. Explicitly, it is given by the following.

\begin{defn}
A {\bf tridendriform-multiplication} on $\mathcal{O}$ consists of a triple $(\pi_\prec, \pi_\succ, \pi_\odot)$ of elements of $\mathcal{O}(2)$ satisfying the following set of identities
\begin{align}
\pi_\prec \circ_1 \pi_\prec =~& \pi_\prec \circ_2 (\pi_\prec + \pi_\succ + \pi_\odot), \label{tridend-1}\\
\pi_\prec \circ \pi_\succ =~& \pi_\succ \circ_2 \pi_\prec,\\
\pi_\succ \circ_1 (\pi_\prec + \pi_\succ + \pi_\odot) =~& \pi_\succ \circ_2 \pi_\succ,\\
\pi_\prec \circ_1 \pi_\odot =~& \pi_\odot \circ_2 \pi_\prec,\\
\pi_\odot \circ_1 \pi_\prec =~& \pi_\odot \circ_2 \pi_\succ,\\
\pi_\odot \circ_1 \pi_\succ =~& \pi_\succ \circ_2 \pi_\odot,\\
\pi_\odot \circ_1 \pi_\odot =~& \pi_\odot \circ_2 \pi_\odot. \label{tridend-7}
\end{align}
\end{defn}

When $\mathcal{O}$ is the endomorphism operad $\mathrm{End}_A$, a tridendriform-multiplication on $\mathrm{End}_A$ is equivalent to a tridendriform algebra structure on $A$. In the following, we observe that a tridendriform-multiplication splits multiplication.

\begin{prop}
Let $(\pi_\prec, \pi_\succ, \pi_\odot)$ be a tridendriform-multiplication on the nonsymmetric operad $\mathcal{O}$. Then $\pi_\prec + \pi_\succ + \pi_\odot$ is a multiplication on $\mathcal{O}$.
\end{prop}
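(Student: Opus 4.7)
The plan is to prove the identity $\pi \circ_1 \pi = \pi \circ_2 \pi$ for $\pi := \pi_\prec + \pi_\succ + \pi_\odot$ by directly summing the seven defining identities (\ref{tridend-1})--(\ref{tridend-7}) of the tridendriform-multiplication. Using bilinearity of the partial compositions, each of $\pi \circ_1 \pi$ and $\pi \circ_2 \pi$ expands as a sum of nine terms indexed by the pairs $(x,y) \in \{\prec, \succ, \odot\}^2$. The job is then merely bookkeeping: to check that every $\pi_x \circ_1 \pi_y$ appears exactly once among the left-hand sides of (\ref{tridend-1})--(\ref{tridend-7}) and every $\pi_x \circ_2 \pi_y$ appears exactly once among the right-hand sides.

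In detail, I would note that (\ref{tridend-1}) absorbs the single left term $\pi_\prec \circ_1 \pi_\prec$ against the three right terms $\pi_\prec \circ_2 \pi_\prec + \pi_\prec \circ_2 \pi_\succ + \pi_\prec \circ_2 \pi_\odot$; symmetrically, (\ref{tridend-3}) absorbs the three left terms $\pi_\succ \circ_1 (\pi_\prec + \pi_\succ + \pi_\odot)$ against the single right term $\pi_\succ \circ_2 \pi_\succ$; and the remaining five identities (\ref{tridend-2}), (\ref{tridend-4})--(\ref{tridend-7}) each contribute a single term to each side. A tally confirms that the nine pairs $(x,y)$ are each covered exactly once on each side, so the sum of the seven identities is precisely $\pi \circ_1 \pi - \pi \circ_2 \pi = 0$.

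A conceptually cleaner (but equivalent) route mirrors Proposition \ref{dend-tot-mor}: define $\varphi_n : \mathcal{O}^\mathrm{TriDend}(n) \to \mathcal{O}(n)$ by summing all components $f \mapsto \sum_{[r] \in U_n} f^{[r]}$, check that $\varphi = \{\varphi_n\}_{n \geq 1}$ is a morphism of nonsymmetric operads using the combinatorial maps $R_0, R_i$ for tridendriform algebras (the argument is the same as for Dend, replacing $C_n$ by $P_n$), and then apply $\varphi_2$ to the multiplication $(\pi_\prec, \pi_\succ, \pi_\odot)$ on $\mathcal{O}^\mathrm{TriDend}$ to obtain the multiplication $\pi_\prec + \pi_\succ + \pi_\odot$ on $\mathcal{O}$.

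There is no genuine obstacle here: the statement is a pure bookkeeping consequence of the way the seven identities (\ref{tridend-1})--(\ref{tridend-7}) were set up. The only thing to be careful about is making sure no pair $(x,y)$ is double-counted or missed; the conceptual proof via the operad morphism $\varphi$ bypasses this bookkeeping entirely. I would present the direct summation argument for concreteness, since it occupies only a few lines.
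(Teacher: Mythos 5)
Your proposal is correct and its main argument (summing the seven defining identities and checking that each pair $\pi_x \circ_1 \pi_y$ and $\pi_x \circ_2 \pi_y$ occurs exactly once on the appropriate side) is precisely the proof given in the paper. The alternative route via the total-sum operad morphism is a valid extra observation but is not needed.
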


\begin{proof}
By adding the left-hand sides of the identities (\ref{tridend-1})-(\ref{tridend-7}), we simply get $(\pi_\prec + \pi_\succ + \pi_\odot) \circ_1 (\pi_\prec + \pi_\succ + \pi_\odot)$. On the other hand, by adding the right-hand sides, we get $(\pi_\prec + \pi_\succ + \pi_\odot) \circ_2 (\pi_\prec + \pi_\succ + \pi_\odot)$. This shows that $\pi_\prec + \pi_\succ + \pi_\odot$ is a multiplication on $\mathcal{O}$.
\end{proof}

Note that a tridendriform-multiplication $(\pi_\prec, \pi_\succ, \pi_\odot)$ on the nonsymmetric operad $\mathcal{O}$ with $\pi_\odot = 0$ is nothing but a dendriform-multiplication. In general, a tridendriform-multiplication always induces a dendriform-multiplication. The proof of the following result is straightforward. Hence we omit the proof.

\begin{prop}
Let $(\pi_\prec, \pi_\succ, \pi_\odot)$ be a tridendriform-multiplication on the nonsymmetric operad $\mathcal{O}$. Then $(\pi_\prec + \pi_\odot, \pi_\succ )$ is a dendriform-multiplication on $\mathcal{O}$.
\end{prop}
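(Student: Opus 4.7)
The plan is a direct verification: I would check each of the three dendriform-multiplication identities (\ref{dend-mul-1})--(\ref{dend-mul-3}) for the pair $(\pi_\prec + \pi_\odot,\, \pi_\succ)$ by expanding using bilinearity of the partial compositions $\circ_1, \circ_2$ and then substituting from the seven tridendriform identities (\ref{tridend-1})--(\ref{tridend-7}). Since the identities (\ref{tridend-1})--(\ref{tridend-7}) were designed so that summing all left-hand sides equals summing all right-hand sides, partial sums of them corresponding to a suitable grouping of the operations should collapse in the desired way.

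For identity (\ref{dend-mul-1}) applied to the new pair, I would expand
\[
(\pi_\prec + \pi_\odot) \circ_1 (\pi_\prec + \pi_\odot) = \pi_\prec \circ_1 \pi_\prec + \pi_\prec \circ_1 \pi_\odot + \pi_\odot \circ_1 \pi_\prec + \pi_\odot \circ_1 \pi_\odot,
\]
and rewrite each of the four terms using (\ref{tridend-1}), (\ref{tridend-4}), (\ref{tridend-5}), (\ref{tridend-7}) respectively. After collecting, the sum matches term-by-term the expansion of $(\pi_\prec + \pi_\odot) \circ_2 \bigl((\pi_\prec + \pi_\odot) + \pi_\succ\bigr)$, which has six $\circ_2$ summands.

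For identity (\ref{dend-mul-2}), I would expand $(\pi_\prec + \pi_\odot) \circ_1 \pi_\succ = \pi_\prec \circ_1 \pi_\succ + \pi_\odot \circ_1 \pi_\succ$ and apply (\ref{tridend-2}) and (\ref{tridend-6}) to obtain $\pi_\succ \circ_2 \pi_\prec + \pi_\succ \circ_2 \pi_\odot = \pi_\succ \circ_2 (\pi_\prec + \pi_\odot)$, as required. For identity (\ref{dend-mul-3}), I would simply observe that $(\pi_\prec + \pi_\odot) + \pi_\succ = \pi_\prec + \pi_\succ + \pi_\odot$, so the identity $\pi_\succ \circ_1 ((\pi_\prec + \pi_\odot) + \pi_\succ) = \pi_\succ \circ_2 \pi_\succ$ is literally (\ref{tridend-3}).

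There is no real obstacle here: the proof is essentially bookkeeping, invoking each of the seven tridendriform identities exactly once (identities (\ref{tridend-1}), (\ref{tridend-4}), (\ref{tridend-5}), (\ref{tridend-7}) for the first dendriform relation; (\ref{tridend-2}) and (\ref{tridend-6}) for the second; and (\ref{tridend-3}) for the third). This explains why the author chose to omit the calculation. The one mild point of care is to make sure one uses the correct side (the $\circ_1$ side or the $\circ_2$ side) of each tridendriform identity when substituting, so that the resulting terms line up with the $\circ_2$-expansion on the right-hand side of the desired dendriform relation.
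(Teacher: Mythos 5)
Your proposal is correct and is precisely the straightforward term-by-term verification the paper has in mind when it declares the proof ``straightforward'' and omits it: expanding $(\pi_\prec+\pi_\odot)\circ_1(\pi_\prec+\pi_\odot)$ and substituting the first, fourth, fifth and seventh tridendriform identities yields exactly the six $\circ_2$-terms of $(\pi_\prec+\pi_\odot)\circ_2(\pi_\prec+\pi_\odot+\pi_\succ)$, the second and sixth identities give the middle dendriform relation, and the third identity is literally the last one. Each of the seven identities is used exactly once, as you note, so there is nothing to add.
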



\section{Dendriform-family algebras}\label{sec-5}

Let $\Omega$ be a fixed semigroup. Given a nonsymmetric operad $\mathcal{O}$, here we first construct a new nonsymmetric operad $\mathrm{Fam}(\mathcal{O}^\Omega)^\mathrm{Dend}$, When $\mathcal{O} = \mathrm{End}_A$ the endomorphism operad associated to $A$, a multiplication on the operad $\mathrm{Fam}(\mathrm{End}_A^\Omega)^\mathrm{Dend}$ corresponds to a dendriform-family structure on $A$. This characterization allows us to define the cohomology of a dendriform-family algebra. Finally, we introduce and study homotopy dendriform-family algebras ($Dend_\infty$-family algebras in short).


\subsection{Dendriform-family algebras and the operad $\mathrm{Fam}(\mathcal{O}^\Omega)^\mathrm{Dend}$}

\begin{defn} \cite{zhang-free}
A {\bf dendriform-family algebra} is a pair $(A, \{ \prec_\alpha, \succ_\alpha \}_{\alpha \in \Omega})$ consisting of a ${\bf k}$-module $A$ together with a collection $\{ \prec_\alpha, \succ_\alpha : A \otimes A \rightarrow A \}_{\alpha \in \Omega}$ of ${\bf k}$-bilinear maps on $A$ satisfying
\begin{align*}
(a \prec_\alpha b) \prec_\beta c                              &= a \prec_{\alpha \beta} (b \prec_\beta c + b \succ_\alpha c)\\
(a \succ_\alpha b) \prec_\beta c                              &= a \succ_\alpha (b \prec_\beta c),\\
(a \prec_\beta b + a \succ_\alpha b) \succ_{\alpha \beta} c  &= a \succ_\alpha (b \succ_\beta c), \text{ for } a,b,c \in A \text{ and } \alpha, \beta \in \Omega.
\end{align*}
\end{defn}

One can construct a dendriform algebra from a dendriform-family algebra \cite{zhang}. 

\begin{prop}\label{dend-fam-dend}
Let $(A, \{ \prec_\alpha, \succ_\alpha \}_{\alpha \in \Omega})$ be a dendriform-family algebra. Then the {\bf k}-module $A \otimes {\bf k} \Omega$ equipped with the operations $\prec, \succ : (A \otimes {\bf k} \Omega) \otimes (A \otimes {\bf k} \Omega) \rightarrow (A \otimes {\bf k} \Omega)$ is a dendriform algebra, where
\begin{align*}
(a \otimes \alpha ) \prec (b \otimes \beta) = a \prec_\beta b \otimes \alpha \beta  ~ \text{ and } ~ (a \otimes \alpha ) \succ (b \otimes \beta) = a \succ_\alpha b \otimes \alpha \beta.
\end{align*}
\end{prop}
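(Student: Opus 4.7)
The plan is a direct verification of the three defining identities (\ref{dend-1})–(\ref{dend-3}) of a dendriform algebra on elements of the form $a\otimes\alpha$, $b\otimes\beta$, $c\otimes\gamma \in A\otimes{\bf k}\Omega$, using the dendriform-family identities. Since both $\prec$ and $\succ$ on $A\otimes{\bf k}\Omega$ are defined by linear-extension formulas that multiply the semigroup labels while selecting a specific $\prec_{(\cdot)}$ or $\succ_{(\cdot)}$ from the family, it suffices to check the identities on pure tensors; the full statement will then follow by bilinearity.

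For the first dendriform axiom, I will compute each side. The left side expands as
\begin{align*}
\big((a\otimes\alpha)\prec(b\otimes\beta)\big)\prec(c\otimes\gamma) = (a\prec_\beta b)\prec_\gamma c \,\otimes\, \alpha\beta\gamma.
\end{align*}
The right side expands as
\begin{align*}
(a\otimes\alpha)\prec\big((b\otimes\beta)\prec(c\otimes\gamma)+(b\otimes\beta)\succ(c\otimes\gamma)\big)
= a\prec_{\beta\gamma}(b\prec_\gamma c+b\succ_\beta c)\,\otimes\,\alpha\beta\gamma.
\end{align*}
The tensor components agree by the first defining identity of the dendriform-family structure (applied with $\alpha$ replaced by $\beta$ and $\beta$ replaced by $\gamma$), while the semigroup components agree by associativity of $\Omega$. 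The checks for (\ref{dend-2}) and (\ref{dend-3}) are analogous: in each case both sides give a tensor product of one of the dendriform-family identities (with appropriately renamed semigroup indices) and the product $\alpha\beta\gamma\in\Omega$.

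There is no real obstacle here; the proof is a bookkeeping exercise. The only point requiring attention is checking that the particular placement of $\alpha$ and $\beta$ in the definitions $(a\otimes\alpha)\prec(b\otimes\beta)=a\prec_\beta b\otimes\alpha\beta$ and $(a\otimes\alpha)\succ(b\otimes\beta)=a\succ_\alpha b\otimes\alpha\beta$ is exactly what is needed to line up with the labels $\alpha,\beta,\alpha\beta$ appearing in the dendriform-family axioms; this is precisely the motivation for the asymmetric convention in the definitions of $\prec$ and $\succ$.
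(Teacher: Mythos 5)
Your verification is correct: each of the three dendriform axioms on pure tensors reduces, after the index substitutions you indicate, precisely to the corresponding dendriform-family identity tensored with $\alpha\beta\gamma$, and bilinearity finishes the argument. The paper itself gives no proof of this proposition (it defers to \cite{zhang}), and your direct computation is exactly the standard argument one would expect there.
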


In \cite{fard-bondia} the authors introduced a notion of Rota-Baxter family as a generalization of Rota-Baxter operator. Let $A$ be an associative algebra (with the product $\cdot$ ). A {\bf Rota-Baxter family} on the algebra $A$ is a collection $\{ R_\alpha : A \rightarrow A \}_{\alpha \in \Omega}$ of ${\bf k}$-linear maps satisfying
\begin{align*}
R_\alpha (a) \cdot R_\beta (b) = R_{\alpha \beta} (R_\alpha (a) \cdot b + a \cdot R_\beta (b)), \text{ for } a, b \in A \text{ and } \alpha, \beta \in \Omega.
\end{align*}

Let $\{ R_\alpha : A \rightarrow A \}_{\alpha \in \Omega}$ be a Rota-Baxter family on the algebra $A$. Then it has been shown in \cite{zhang-free} that the pair $(A, \{ \prec_\alpha, \succ_\alpha \}_{\alpha \in \Omega})$ is a dendriform-family algebra, where
 \begin{align*}
 a \prec_\alpha b = a \cdot R_\alpha (b) ~~ \text{ and } ~~ a \succ_\alpha b = R_\alpha (a) \cdot b,~ \text{ for } a, b \in A, \alpha \in \Omega.
 \end{align*}

The following notion of associative algebra relative to semigroup was introduced by Aguiar \cite{agu} which are related to dendriform-family algebras.

\begin{defn}
An {\bf associative algebra relative to $\Omega$} is a ${\bf k}$-module $A$ equipped with a collection of ${\bf k}$-bilinear maps $\{ \cdot_{\alpha, \beta} : A \otimes A \rightarrow A \}_{\alpha, \beta \in \Omega}$ satisfying
\begin{align}\label{ass-rel}
(a \cdot_{\alpha, \beta} b) \cdot_{\alpha \beta, \gamma} c = a \cdot_{\alpha, \beta \gamma} (b \cdot_{\beta, \gamma} c),~\text{ for } a, b, c \in A \text{ and } \alpha, \beta, \gamma \in \Omega.
\end{align}
\end{defn}

\begin{prop}
If $(A, \{ \prec_\alpha , \succ_\alpha \}_{\alpha \in \Omega})$ is a dendriform-family algebra, then $(A, \{ \cdot_{\alpha, \beta} : A \otimes A \rightarrow A \}_{\alpha , \beta \in \Omega})$ is an associative algebra relative to $\Omega$, where $a \cdot_{\alpha, \beta} b := a \prec_{\beta} b + a \succ_\alpha b$, for $a, b \in A$.
\end{prop}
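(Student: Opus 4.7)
The plan is to exploit Proposition \ref{dend-fam-dend}, which asserts that the ${\bf k}$-module $A \otimes {\bf k}\Omega$ carries a genuine dendriform algebra structure whenever $(A,\{\prec_\alpha,\succ_\alpha\}_{\alpha\in\Omega})$ is a dendriform-family algebra. Since in any dendriform algebra the sum $\prec + \succ$ is an associative product, the total product on $A \otimes {\bf k}\Omega$ is associative. A direct computation on simple tensors gives
$$(a \otimes \alpha) \cdot (b \otimes \beta) = (a \prec_\beta b + a \succ_\alpha b) \otimes \alpha\beta = (a \cdot_{\alpha,\beta} b) \otimes \alpha\beta,$$
so the family operation $\cdot_{\alpha,\beta}$ is precisely the $A$-component of this total product. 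Substituting three simple tensors $a \otimes \alpha$, $b \otimes \beta$, $c \otimes \gamma$ into associativity of the total product, both sides land in $A \otimes \alpha\beta\gamma$, and comparing $A$-coefficients recovers exactly the relative-associativity identity (\ref{ass-rel}). This offloads all the bookkeeping onto Proposition \ref{dend-fam-dend}.

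A fully direct verification is also straightforward. One expands both sides of (\ref{ass-rel}) using $a \cdot_{\alpha,\beta} b = a \prec_\beta b + a \succ_\alpha b$, which produces a handful of trilinear terms on each side. The three dendriform-family identities are then applied with the reindexing $(\alpha,\beta) \mapsto (\beta,\gamma)$ for the first two, and with the \emph{original} parameters $\alpha,\beta$ for the third. The key observation is that the clump $(a \prec_\beta b + a \succ_\alpha b) \succ_{\alpha\beta} c$ appearing after expansion of the left-hand side is exactly the left-hand side of the third defining identity, so it collapses to $a \succ_\alpha (b \succ_\beta c)$ in one stroke, without first splitting the two $\succ$-terms; the remaining four terms match those obtained by expanding the right-hand side using the first and second identities.

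The only real obstacle is notational: the indices $\alpha,\beta,\gamma$ are not distributed symmetrically among $\prec_\bullet$ and $\succ_\bullet$, and one has to be careful that the correct subscript is attached to each operation when invoking the three defining identities. For this reason I would present the proof via the first route, since Proposition \ref{dend-fam-dend} has already packaged the index bookkeeping into the semigroup factor ${\bf k}\Omega$, and the relative associativity then becomes a one-line consequence of ordinary associativity of the total dendriform product.
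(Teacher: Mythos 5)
The paper states this proposition without proof, so there is nothing to compare against; your argument is correct and fills that gap. Your preferred route is sound: Proposition \ref{dend-fam-dend} makes $A \otimes {\bf k}\Omega$ a dendriform algebra, the total product $\prec + \succ$ of any dendriform algebra is associative, and on simple tensors the total product is $(a\otimes\alpha)\cdot(b\otimes\beta) = (a\cdot_{\alpha,\beta}b)\otimes\alpha\beta$; since ${\bf k}\Omega$ is free on $\Omega$, equality of the two parenthesizations in $A\otimes{\bf k}\,\alpha\beta\gamma$ forces equality of the $A$-components, which is exactly (\ref{ass-rel}). The direct verification you sketch also works: expanding both sides of (\ref{ass-rel}) gives three terms on the left, namely $(a\prec_\beta b)\prec_\gamma c$, $(a\succ_\alpha b)\prec_\gamma c$ and the clump $(a\prec_\beta b + a\succ_\alpha b)\succ_{\alpha\beta}c$, which match the three terms on the right via the first, second and third family identities respectively. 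One small correction to your bookkeeping there: the reindexing $(\alpha,\beta)\mapsto(\beta,\gamma)$ is right for the \emph{first} identity only; for the second you must substitute $(\alpha,\beta)\mapsto(\alpha,\gamma)$ (rename only $\beta$ to $\gamma$), since the needed instance is $(a\succ_\alpha b)\prec_\gamma c = a\succ_\alpha(b\prec_\gamma c)$ and your stated substitution would produce $\succ_\beta$ in place of $\succ_\alpha$. This is precisely the notational trap you flagged, and it does not affect the conclusion or your preferred proof.
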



Let $\mathcal{O}$ be a nonsymmetric operad. 
We will now construct a nonsymmetric operad $\mathcal{O}^\Omega$ which will helps us to construct our required operad in the end. For each $n \geq 1$, we consider the ${\bf k}$-module
\begin{align*}
\mathcal{O}^\Omega (n) = {\bf k}[\Omega^{\times n}] \otimes \Omega (n).
\end{align*}
An element $f \in \mathcal{O}^\Omega (n)$ is a collection $f = \{ f_{\alpha_1, \ldots, \alpha_n} \in \mathcal{O}(n)  \}_{\alpha_1, \ldots, \alpha_n \in \Omega}$ of elements of $\mathcal{O}(n)$ labelled by $\Omega^{\times n}$. Note that the element $\mathds{1} \in \mathcal{O}(1)$ can be regarded as an element $\mathds{1} = \{ \mathds{1}_\alpha = \mathds{1} \in \mathcal{O}(1) \}_{\alpha \in \Omega} \in \mathcal{O}^\Omega (1)$. For each $m, n \geq 1$ and $1 \leq i \leq m$, we define a ${\bf k}$-bilinear map $\circ_i^\Omega : \mathcal{O}^\Omega (m) \otimes \mathcal{O}^\Omega (n) \rightarrow \mathcal{O}^\Omega (m+n-1)$ by
\begin{align}\label{omega-op}
(f \circ_i^\Omega g)_{\alpha_1, \ldots, \alpha_{m+n-1}} := f_{\alpha_1, \ldots, \alpha_i \cdots \alpha_{i+n-1}, \ldots, \alpha_{m+n-1}} \circ_i g_{\alpha_i, \ldots, \alpha_{i+n-1}},
\end{align}
for $f \in \mathcal{O}^\Omega (m)$, $g \in \mathcal{O}^\Omega (n)$ and $\alpha_1, \ldots, \alpha_{m+n-1} \in \Omega$.

\begin{thm}
The triple $\mathcal{O}^\Omega = (  \{ \mathcal{O}^\Omega (n) \}_{n \geq 1}, \circ_i^\Omega, \mathds{1} )$ is a nonsymmetric operad.
\end{thm}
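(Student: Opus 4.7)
The plan is to verify the three operad axioms (nested associativity, parallel associativity, and the identity law) componentwise, by evaluating both sides at an arbitrary tuple of indices $(\alpha_1, \ldots, \alpha_{m+n+p-2}) \in \Omega^{\times(m+n+p-2)}$ (or the appropriate arity), and reducing to the corresponding operad axioms in $\mathcal{O}$. The only nontrivial ingredient beyond the operad structure of $\mathcal{O}$ is the associativity of the semigroup $\Omega$, which is what allows products of $\alpha$'s to be regrouped in the subscripts.

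For nested associativity I take $f \in \mathcal{O}^\Omega(m)$, $g \in \mathcal{O}^\Omega(n)$, $h \in \mathcal{O}^\Omega(p)$, fix $1 \leq i \leq m$ and $1 \leq j \leq n$, and unfold the definition (\ref{omega-op}) twice. On the left-hand side, the $(\alpha_1, \ldots, \alpha_{m+n+p-2})$-component of $(f \circ_i^\Omega g) \circ_{i+j-1}^\Omega h$ produces an inner subscript $\alpha_i \cdots \alpha_{i+j-2} \cdot (\alpha_{i+j-1} \cdots \alpha_{i+j+p-2}) \cdot \alpha_{i+j+p-1} \cdots \alpha_{i+n+p-2}$ inside $f$, which by associativity of $\Omega$ equals $\alpha_i \cdots \alpha_{i+n+p-2}$. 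This is exactly the inner subscript appearing inside $f$ when one unfolds the $(\alpha_1, \ldots, \alpha_{m+n+p-2})$-component of $f \circ_i^\Omega (g \circ_j^\Omega h)$. Having matched the $f$-subscripts, the equality of the two expressions reduces to the nested partial-composition axiom (\ref{pc-iden}) in $\mathcal{O}$, applied to the elements of $\mathcal{O}(m), \mathcal{O}(n), \mathcal{O}(p)$ labelling $f, g, h$ at the appropriate tuples.

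For parallel associativity ($1 \leq i < j \leq m$), both sides have the same set of indices appearing as subscripts of $f$, $g$, and $h$ (the products involving $\alpha$'s do not interact since $g$ and $h$ occupy disjoint index-blocks), so the identity reduces directly, componentwise, to the parallel half of (\ref{pc-iden}) in $\mathcal{O}$. The identity axiom is immediate: since $\mathds{1}_\alpha = \mathds{1} \in \mathcal{O}(1)$ for every $\alpha$ and the semigroup element $\alpha_i$ standing alone equals itself, $(f \circ_i^\Omega \mathds{1})_{\alpha_1, \ldots, \alpha_m} = f_{\alpha_1, \ldots, \alpha_m} \circ_i \mathds{1} = f_{\alpha_1, \ldots, \alpha_m}$, and analogously for $\mathds{1} \circ_1^\Omega f$.

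The main obstacle is purely bookkeeping: keeping straight, for the nested case, exactly which of the $\alpha_k$ get lumped into a single semigroup product at each step, and confirming that the $g$-subscript on the left-hand side ($\alpha_i, \ldots, \alpha_{i+j-2}, \alpha_{i+j-1}\cdots\alpha_{i+j+p-2}, \alpha_{i+j+p-1}, \ldots, \alpha_{i+n+p-2}$) coincides with the $g$-subscript produced inside $(g \circ_j^\Omega h)_{\alpha_i, \ldots, \alpha_{i+n+p-2}}$ on the right-hand side. Once that is set up carefully, the whole argument is a routine componentwise reduction to the operad axioms of $\mathcal{O}$ plus one application of associativity in $\Omega$.
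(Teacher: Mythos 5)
Your proposal is correct and follows essentially the same route as the paper: a componentwise verification at an arbitrary tuple of $\Omega$-indices, reducing each operad axiom for $\circ_i^\Omega$ to the corresponding axiom in $\mathcal{O}$, with associativity of $\Omega$ regrouping the products of $\alpha$'s in the $f$-subscript for the nested case. Your bookkeeping of the $f$- and $g$-subscripts matches the computation in the paper's proof exactly.
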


\begin{proof}
Let $f = \{ f_{\alpha_1, \ldots, \alpha_m} \in \mathcal{O}(m) \}_{\alpha_1, \ldots, \alpha_m \in \Omega} \in \mathcal{O}^\Omega (m)$, $g = \{ g_{\alpha_1, \ldots, \alpha_n} \in \mathcal{O}(n) \}_{\alpha_1, \ldots, \alpha_n \in \Omega} \in \mathcal{O}^\Omega (n)$ and $h = \{ h_{\alpha_1, \ldots, \alpha_p} \in \mathcal{O}(p) \}_{\alpha_1, \ldots, \alpha_p \in \Omega} \in \mathcal{O}^\Omega (p)$. Then for $1 \leq i \leq m$ and $1 \leq j \leq n$, we have
\begin{align*}
&(( f \circ_i^\Omega g) \circ_{i+j-1}^\Omega h)_{\alpha_1, \ldots, \alpha_{m+n+p-2}} \\
&= ( f \circ_i^\Omega g)_{\alpha_1, \ldots, \alpha_{i+j-1} \cdots \alpha_{i+j+p-2}, \ldots, \alpha_{m+n+p-2}} \circ_{i+j-1} h_{\alpha_{i+j-1} , \ldots, \alpha_{i+j+p-2}} \\
&= \big(  f_{\alpha_1, \ldots, \alpha_i \cdots \alpha_{i+n+p-2}, \ldots, \alpha_{m+n+p-2}} \circ_i g_{\alpha_i, \ldots, \alpha_{i+j-1} \cdots \alpha_{i+j+p-2}, \ldots, \alpha_{i+n+p-2}}   \big) \circ_{i+j-1} h_{\alpha_{i+j-1} , \ldots, \alpha_{i+j+p-2}} \\
&=f_{\alpha_1, \ldots, \alpha_i \cdots \alpha_{i+n+p-2}, \ldots, \alpha_{m+n+p-2}} \circ_i \big(   g_{\alpha_i, \ldots, \alpha_{i+j-1} \cdots \alpha_{i+j+p-2}, \ldots, \alpha_{i+n+p-2}} \circ_j    h_{\alpha_{i+j-1} , \ldots, \alpha_{i+j+p-2}}  \big) \\
&= f_{\alpha_1, \ldots, \alpha_i \cdots \alpha_{i+n+p-2}, \ldots, \alpha_{m+n+p-2}} \circ_i (g \circ_j^\Omega h)_{\alpha_i, \ldots, \alpha_{i+n+p-2}} \\
&= (f \circ_i^\Omega (g \circ_j^\Omega h))_{\alpha_1, \ldots, \alpha_{m+n+p-2}}.
\end{align*}
Similarly, for $1 \leq i < j \leq m$,
\begin{align*}
&((f \circ_i^\Omega g) \circ_{j+n-1}^\Omega h )_{\alpha_1, \ldots, \alpha_{m+n+p-2}} \\
&= (f \circ_i^\Omega g)_{\alpha_1, \ldots, \alpha_{j+n-1} \cdots \alpha_{j+n+p-2}, \ldots, \alpha_{m+n+p-2}} \circ_{j+n-1} h_{\alpha_{j+n-1}, \ldots, \alpha_{j+n+p-2}} \\
&= \big( f_{\alpha_1, \ldots, \alpha_i \cdots \alpha_{i+n-1}, \ldots, \alpha_{j+n-1} \cdots \alpha_{j+n+p-2}, \ldots, \alpha_{m+n+p-2}}  \circ_i g_{\alpha_i, \ldots, \alpha_{i+n-1}} \big) \circ_{j+n-1} h_{\alpha_{j+n-1}, \ldots, \alpha_{j+n+p-2}} \\
&= \big( f_{\alpha_1, \ldots, \alpha_i \cdots \alpha_{i+n-1}, \ldots, \alpha_{j+n-1} \cdots \alpha_{j+n+p-2}, \ldots, \alpha_{m+n+p-2}} \circ_j  h_{\alpha_{j+n-1}, \ldots, \alpha_{j+n+p-2}}  \big) \circ_i g_{\alpha_i, \ldots, \alpha_{i+n-1}} \\
&= (f \circ_j h)_{\alpha_1, \ldots, \alpha_i \cdots \alpha_{i+n-1}, \ldots, \alpha_{m+n+p-2}} \circ_i g_{\alpha_i, \ldots, \alpha_{i+n-1}}\\
&= ((f \circ_j h) \circ_i g)_{\alpha_1, \ldots, \alpha_{m+n+p-2}}.
\end{align*}
Moreover, for $f = \{ f_{\alpha_1, \ldots, \alpha_m} \in \mathcal{O}(m) \}_{\alpha_1, \ldots, \alpha_m \in \Omega} \in \mathcal{O}^\Omega (m)$ and $1 \leq i \leq m$, we have
\begin{align*}
&( f \circ_i^\Omega \mathds{1})_{\alpha_1, \ldots, \alpha_m} = f_{\alpha_1, \ldots, \alpha_m} \circ_i \mathds{1}_{\alpha_i} = f_{\alpha_1, \ldots, \alpha_m} \circ_i \mathds{1} = f_{\alpha_1, \ldots, \alpha_m}, \\
&(\mathds{1} \circ_1^\Omega f)_{\alpha_1, \ldots, \alpha_m} = \mathds{1}_{\alpha_1 \cdots \alpha_m} \circ_1 f_{\alpha_1, \ldots, \alpha_m} = \mathds{1} \circ_1 f_{\alpha_1, \ldots, \alpha_m} = f_{\alpha_1, \ldots, \alpha_m}.
\end{align*}
This shows that $f \circ_i^\Omega \mathds{1} = \mathds{1} \circ_1 f = f$. Therefore, the triple $\mathcal{O}^\Omega = (  \{ \mathcal{O}^\Omega (n) \}_{n \geq 1}, \circ_i^\Omega, \mathds{1} )$ is a nonsymmetric operad.
\end{proof}

Let $\mathcal{O} = \mathrm{End}_A$ be the endomorphism operad associated to a ${\bf k}$-module $A$. Then the operad $\mathcal{O}^\Omega = \mathrm{End}_A^\Omega$ is given by $\mathrm{End}_A^\Omega (n) = {\bf k}[\Omega^{\times n}] \otimes \mathrm{End}_A (n), \text{ for } n \geq 1$, and the partial compositions are given by (\ref{omega-op}) where on the right hand side we use the partial compositions of the endomorphism operad $\mathrm{End}_A$.

An element $\pi \in \mathrm{End}_A^\Omega (2)$ is equivalent to having a collection $\{ ~\cdot_{\alpha, \beta} :  A \otimes A \rightarrow A \}_{\alpha, \beta \in \Omega}$ of ${\bf k}$-bilinear maps given by $a \cdot_{\alpha, \beta} b = \pi_{\alpha, \beta} (a, b)$, for $a, b \in A$. Note that $\pi$ is a multiplication on the operad $\mathrm{End}_A^\Omega$ if and only if $\pi \circ_1^\Omega \pi = \pi \circ_2^\Omega \pi$, equivalently, the identity (\ref{ass-rel}) holds. 
In other words, $\pi$ defines a structure of associative algebra relative to $\Omega$ on the ${\bf k}$-module $A$. As a summary, we conclude that if $\mathcal{O}$ is the operad to study `Lod' algebras in terms of multiplications, then $\mathcal{O}^\Omega$ is the operad to study Lod-family algebras. Here `Lod' is any fixed type of Loday-algebras.

\medskip

Given any nonsymmetric operad $\mathcal{O}$, in the previous Section, we constructed an operad $\mathcal{O}^\mathrm{Dend}$. Since $\mathcal{O}^\Omega$ is a nonsymmetric operad, it follows that $(\mathcal{O}^\Omega)^\mathrm{Dend}$ is a nonsymmetric operad, where
\begin{align*}
(\mathcal{O}^\Omega)^\mathrm{Dend} (n) := {\bf k}[C_n] \otimes \mathcal{O}^\Omega (n) = {\bf k}[C_n] \otimes {\bf k}[\Omega^{\times n}] \otimes \mathcal{O}(n), \text{ for } n \geq 1.
\end{align*}
An elements of $(\mathcal{O}^\Omega)^\mathrm{Dend} (n)$ is an $n$-tuple $f = (f^{[1]}, \ldots, f^{[n]}),$ where each $f^{[r]} \in \mathcal{O}^\Omega (n)$, for $1 \leq r \leq n$. That is,
\begin{align*}
f^{[r]} = \big\{ f^{[r]}_{\alpha_1, \ldots, \alpha_n} \in \mathcal{O}(n)  \big\}_{\alpha_1, \ldots, \alpha_n \in \mathcal{O}(n)}, \text{ for } 1 \leq r \leq n.
\end{align*}
One can easily write the partial compositions of the operad $(\mathcal{O}^\Omega)^\mathrm{Dend}$. Let $\llbracket ~, ~ \rrbracket_{(\mathcal{O}^\Omega)^\mathrm{Dend}}$ denotes the degree $-1$ graded Lie bracket on the graded ${\bf k}$-module $(\mathcal{O}^\Omega)^\mathrm{Dend} (\bullet)$.

Consider the ${\bf k}$-submodule~ $\mathrm{Fam}(\mathcal{O}^\Omega)^\mathrm{Dend} (n) \subset (\mathcal{O}^\Omega)^\mathrm{Dend} (n)$ given by
\begin{align*}
\mathrm{Fam}(\mathcal{O}^\Omega)^\mathrm{Dend} (n) = \big\{   f = (f^{[1]}, \ldots, f^{[n]}) \in (\mathcal{O}^\Omega)^\mathrm{Dend} (n) ~ \big| ~ f^{[r]}_{\alpha_1, \ldots, \alpha_n} \text{ does not depend on } \alpha_r, \text{ for } 1 \leq r \leq n \big\}.
\end{align*}
It is easy to verify that the collection of ${\bf k}$-modules $\big\{ \mathrm{Fam}(\mathcal{O}^\Omega)^\mathrm{Dend} (n) \big\}_{n \geq 1}$ is closed under the partial compositions. Hence $\mathrm{Fam}(\mathcal{O}^\Omega)^\mathrm{Dend}$ is a nonsymmetric operad (which is a suboperad of $(\mathcal{O}^\Omega)^\mathrm{Dend}$). An element $\pi \in \mathrm{Fam}(\mathcal{O}^\Omega)^\mathrm{Dend} (2)$ is a pair $\pi = (\pi^{[1]}, \pi^{[2]})$, where
\begin{align*}
\pi^{[1]} =~& \{ \pi^{[1]}_{\alpha , \beta } \in \mathcal{O}(2) ~|~ \pi^{[1]}_{\alpha, \beta} \text{does not depend on } \alpha \}_{\alpha, \beta \in \Omega},\\
\pi^{[2]} =~& \{ \pi^{[2]}_{\alpha , \beta } \in \mathcal{O}(2) ~|~ \pi^{[2]}_{\alpha, \beta} \text{does not depend on } \beta \}_{\alpha, \beta \in \Omega}.
\end{align*}

\begin{remark}
Let $\mathcal{O} = \mathrm{End}_A$ be the endomorphism operad associated to the ${\bf k}$-module $A$. Any element $\pi \in \mathrm{Fam}(\mathrm{End}_A^\Omega)^\mathrm{Dend} (2)$ corresponds to a collection $\{ \prec_\alpha, \succ_\alpha : A \otimes A \rightarrow A \}_{\alpha, \beta \in \Omega}$ of ${\bf k}$-bilinear maps on $A$ by
\begin{align*}
a \prec_\alpha b = \pi^{[1]}_{-, \alpha} (a, b) ~~~ \text{ and } ~~~  a \succ_\alpha b = \pi^{[2]}_{\alpha, - } (a, b), ~ \text{ for } a, b \in A.
\end{align*}
In the above, the notation $`-'$ indicates that the term does not depend on $`-'$. It is easy to see that $\pi$ is a multiplication on the operad $\mathrm{Fam}(\mathrm{End}_A^\Omega)^\mathrm{Dend}$ if and only if the collection $\{ \prec_\alpha, \succ_\alpha \}_{\alpha \in \Omega}$ defines a dendriform-family structure on the ${\bf k}$-module $A$.
\end{remark}

Let $(A, \{ \prec_\alpha, \succ_\alpha \}_{\alpha \in \Omega} )$ be a dendriform-family algebra. Consider the multiplication $\pi \in \mathrm{Fam}(\mathrm{End}_A^\Omega)^\mathrm{Dend} (2)$ associated to the dendriform-family structure. Since $\pi$ is a multiplication, it induces a cochain complex $\big(  C^\bullet_\pi ( \mathrm{Fam}(\mathrm{End}_A^\Omega)^\mathrm{Dend}   ), \delta_\pi \big)$, where
\begin{align*}
C^n_\pi (\mathrm{Fam}(\mathrm{End}_A^\Omega)^\mathrm{Dend}) = \mathrm{Fam}(\mathrm{End}_A^\Omega)^\mathrm{Dend}(n) ~~ \text{ and } ~~ \delta_\pi = \llbracket \pi, - \rrbracket_{ (\mathrm{End}_A^\Omega)^\mathrm{Dend}}.
\end{align*}
The corresponding cohomology groups are called the {\bf cohomology of the dendriform-family algebra} $(A, \{ \prec_\alpha, \succ_\alpha \}_{\alpha \in \Omega}).$ Since the cohomology is induced by multiplication on a nonsymmetric operad, the cohomology inherits a Gerstenhaber algebra structure. This generalizes the Gerstenhaber structure on the cohomology of an ordinary dendriform algebra \cite{A4,yau}.

\subsection{Homotopy dendriform-family algebras}
In this subsection, we first introduce $A_\infty$-algebras relative to $\Omega$ as the homotopy analogue of associative algebras relative to $\Omega$. Then we introduce homotopy dendriform-family algebras ($Dend_\infty$-family algebras) and study various results regarding such homotopy algebras. 

Let $\mathcal{A} = \oplus_{i \in \mathbb{Z}} \mathcal{A}^i$ be a graded ${\bf k}$-module. For any $k \geq 1$, we consider the space $\mathrm{Hom}_{\Omega^{\times k}} (\mathcal{A}^{\otimes k} , \mathcal{A})$. An element $\mu^k \in \mathrm{Hom}_{\Omega^{\times k}} (\mathcal{A}^{\otimes k} , \mathcal{A})$ is a collection
\begin{align*}
\mu^k = \{ \mu^k_{\alpha_1, \ldots, \alpha_k} : \mathcal{A}^{\otimes k} \rightarrow \mathcal{A} \}_{\alpha_1, \ldots, \alpha_k \in \Omega} \text{ of } k\text{-ary multilinear maps labelled by } \Omega^{\times k}.
\end{align*}

\begin{defn}
An {\bf $A_\infty$-algebra relative to $\Omega$} is a graded ${\bf k}$-module $\mathcal{A} = \oplus_{i \in \mathbb{Z}} \mathcal{A}^i$ together with a collection $\{ \mu^1, \mu^2, \ldots \}$, where
\begin{align*}
 \mu^k \in \mathrm{Hom}_{\Omega^{\times k}} (\mathcal{A}^{\otimes k} , \mathcal{A}) \text{ with } \mathrm{deg } (\mu^k_{\alpha_1, \ldots, \alpha_k}) = k-2, \text{ for all } \alpha_1, \ldots, \alpha_k \text{ and } k \geq 1
\end{align*}
satisfying the following set of identities
\begin{align}\label{inf-rel-om}
\sum_{m+n=N+1} \sum_{i=1}^m &(-1)^{i (n+1) + n (|a_1| + \cdots + |a_{i-1}|)}~ \\
& \mu^m_{\alpha_1, \ldots, \alpha_i \cdots \alpha_{i+n-1}, \ldots, \alpha_{N}} \big(    a_1, \ldots, a_{i-1}, \mu^n_{\alpha_i, \ldots, \alpha_{i+n-1}} (a_i, \ldots a_{i+n-1}), a_{i+n}, \ldots, a_{N} \big) = 0, \nonumber
\end{align}
for homogeneous elements $a_1, \ldots, a_N \in \mathcal{A}$; $\alpha_1, \ldots, \alpha_N \in \Omega$ and $N \geq 1$.
\end{defn}

In the rest of the paper, we will simply write $\pm$ for the sign $(-1)^{i (n+1) + n (|a_1| + \cdots + |a_{i-1}|)}$. Note that $\pm$ depents on $m, n , i$ and the degrees of the elements $a_1, \ldots, a_i$. When $\Omega$ is singleton, we recover the classical notion of $A_\infty$-algebras \cite{stas}. 

Let $(\mathcal{A}, \{ \mu^1, \mu^2, \ldots \})$ be an $A_\infty$-algebra relative to $\Omega$ and the underlying graded ${\bf k}$-module $\mathcal{A}$ is concentrated in arity $0$. In other words, $\mathcal{A} = \mathcal{A}^0$ is a (non-graded) ${\bf k}$-module. Then it follows from the degree reason that $\mu^k = 0$, for $k \neq 2$. The element $\mu^2$ gives rise to a collection of ${\bf k}$-bilinear maps
\begin{align*}
\{ ~\cdot_{\alpha, \beta} : \mathcal{A}^0 \otimes \mathcal{A}^0 \rightarrow \mathcal{A}^0 \}_{\alpha, \beta \in \Omega}, \text{ where } a \cdot_{\alpha, \beta} b = \mu^2_{\alpha, \beta} (a, b), \text{ for } a, b \in \mathcal{A}^0 \text{ and } \alpha, \beta \in \Omega.
\end{align*}
Finally, the identities (\ref{inf-rel-om}) is equivalent to the fact that $\{ ~\cdot_{\alpha, \beta} : \mathcal{A}^0 \otimes \mathcal{A}^0 \rightarrow \mathcal{A}^0 \}_{\alpha, \beta \in \Omega}$ defines an associative algebra relative to $\Omega$ on the ${\bf k}$-module $\mathcal{A}^0$. Therefore, $A_\infty$-algebras relative to $\Omega$ can be regarded as a homotopy generalization of associative algebras relative to $\Omega$.

For the graded ${\bf k}$-module $\mathcal{A}$, we consider another space $\mathrm{Hom}_{\Omega^{\times k}} ({\bf k}[C_k] \otimes \mathcal{A}^{\otimes k} , \mathcal{A})$. An element of $\mathrm{Hom}_{\Omega^{\times k}} ({\bf k}[C_k] \otimes \mathcal{A}^{\otimes k} , \mathcal{A})$ is a $k$-tuple $\eta^k = (\eta^{k,[1]}, \ldots, \eta^{k, [k]})$, where each $\eta^{k, [r]} \in \mathrm{Hom}_{\Omega^{\times k}} ( \mathcal{A}^{\otimes k} , \mathcal{A})$. There is a subspace $\mathrm{FamHom}_{\Omega^{\times k}} ({\bf k}[C_k] \otimes \mathcal{A}^{\otimes k} , \mathcal{A})
\subset \mathrm{Hom}_{\Omega^{\times k}} ({\bf k}[C_k] \otimes \mathcal{A}^{\otimes k} , \mathcal{A})$ given by
\begin{align*}
\mathrm{FamHom}_{\Omega^{\times k}} ({\bf k}[C_k] \otimes \mathcal{A}^{\otimes k} , \mathcal{A}) := \big\{  \eta^k =& (\eta^{k,[1]}, \ldots, \eta^{k, [k]}) \in  \mathrm{Hom}_{\Omega^{\times k}} ({\bf k}[C_k] \otimes \mathcal{A}^{\otimes k} , \mathcal{A}) ~\big|~\\ & \eta^{k, [r]}_{\alpha_1, \ldots, \alpha_k} \text{ does not depend on } \alpha_r, \text{ for } 1 \leq r \leq k \big\}.
\end{align*}

\begin{defn}
A {\bf ${Dend}_\infty$-family algebra} is a graded ${\bf k}$-module $\mathcal{A}$ together with a collection $\{ \eta^1, \eta^2, \ldots \}$, where
\begin{align*}
\eta^k = (  \eta^{k,[1]}, \ldots, \eta^{k, [k]}  ) \in \mathrm{FamHom}_{\Omega^{\times k}} ({\bf k}[C_k] \otimes \mathcal{A}^{\otimes k} , \mathcal{A}) \text{ with } \mathrm{deg}(\eta^{k, [r]}_{\alpha_1, \ldots, \alpha_k}) = k-2, \text{ for all } k \geq 1
\end{align*}
satisfying the following set of identities
\begin{align}\label{dend-fam-inf}
\sum_{m+n=N+1} \sum_{i=1}^m {\pm} ~\eta^{m, R_0 (m;1, \ldots, n, \ldots, 1)[r]}_{\alpha_1, \ldots, \alpha_i \cdots \alpha_{i+n-1}, \ldots, \alpha_{N}} \big(    a_1, \ldots, a_{i-1}, \eta^{n, R_i (m;1, \ldots, n, \ldots, 1)[r]}_{\alpha_i, \ldots, \alpha_{i+n-1}} (a_i, \ldots a_{i+n-1}), a_{i+n}, \ldots, a_{N} \big) = 0,
\end{align}
for homogeneous elements $a_1, \ldots, a_N \in \mathcal{A}$; $\alpha_1, \ldots, \alpha_N \in \Omega$; $[r] \in C_N$ and $N \geq 1$.
\end{defn}

When $\Omega$ is singleton, we get the notion of $Dend_\infty$-algebra considered in \cite{lod-val-book,A4}. Let $\mathcal{A} = \oplus_{i \in \mathbb{Z}} \mathcal{A}^i$ be a graded ${\bf k}$-module. Then $\mathcal{A} \otimes {\bf k}\Omega = \oplus_{i \in \mathbb{Z}} (\mathcal{A} \otimes {\bf k}\Omega)^i$ is a graded ${\bf k}$-module, where $(\mathcal{A} \otimes {\bf k}\Omega)^i = \mathcal{A}^i \otimes {\bf k} \Omega$ for any $i \in \mathbb{Z}$. In the next result, we show that a ${Dend}_\infty$-family algebra structure on a graded ${\bf k}$-module $\mathcal{A}$ induces an ordinary ${Dend}_\infty$-algebra structure on $\mathcal{A} \otimes {\bf k} \Omega$. This generalizes Proposition \ref{dend-fam-dend} in the homotopy context.

\begin{thm}
Let $(\mathcal{A}, \{ \eta^1, \eta^2, \ldots \})$ be a ${Dend}_\infty$-family algebra. Then $(\mathcal{A} \otimes {\bf k} \Omega, \{\overline{\eta}^1, \overline{\eta}^2, \ldots \} )$ is a ${Dend}_\infty$-algebra, where
\begin{align*}
\overline{\eta}^k : {\bf k}[C_k ] \otimes (\mathcal{A} \otimes {\bf k} \Omega)^{\otimes k} \rightarrow (\mathcal{A} \otimes {\bf k} \Omega), ~ \overline{\eta}^{k, [r]} (a_1 \otimes \alpha_1, \ldots, a_k \otimes \alpha_k) = \eta^{k, [r]}_{\alpha_1, \ldots, \alpha_k} (a_1, \ldots, a_k) \otimes \alpha_1 \cdots \alpha_k,
\end{align*}
for $a_1 \otimes \alpha_1, \ldots, a_k \otimes \alpha_k \in \mathcal{A} \otimes {\bf k} \Omega$ and $[r] \in C_k$.
\end{thm}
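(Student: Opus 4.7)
The plan is to verify the defining identities of a $Dend_\infty$-algebra for $\{\overline{\eta}^k\}$ directly by unfolding definitions, factoring out the semigroup tensor factors, and reducing to the identity (\ref{dend-fam-inf}) for the given $Dend_\infty$-family structure. The argument is essentially bookkeeping, so I will set up the computation carefully rather than grinding through every index.

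First I would check that the $\overline{\eta}^{k,[r]}$ are well-defined multilinear maps of the correct degree. The formula is specified on pure tensors $a_1 \otimes \alpha_1, \ldots, a_k \otimes \alpha_k$ and extended by multilinearity; well-definedness in the $a_j$'s is immediate since $\eta^{k,[r]}_{\alpha_1,\ldots,\alpha_k}$ is multilinear in those slots, and $\mathrm{deg}(\overline{\eta}^{k,[r]}) = \mathrm{deg}(\eta^{k,[r]}_{\alpha_1,\ldots,\alpha_k}) = k-2$. Note the ``Fam'' condition on $\eta^{k,[r]}$ is not needed to make $\overline{\eta}^{k,[r]}$ well-defined — it is an extra symmetry that will be invisible after tensoring with ${\bf k}\Omega$ — but it ensures that the structure really does come from a family algebra rather than a fully general $\Omega$-labelled one.

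Next I would set $A_j := a_j \otimes \alpha_j$ and compute, for fixed $[r] \in C_N$ and $N \geq 1$, the left-hand side of the (ordinary) $Dend_\infty$ identity
\begin{align*}
\sum_{m+n=N+1} \sum_{i=1}^m \pm~ \overline{\eta}^{m,\, R_0(m;1,\ldots,n,\ldots,1)[r]}\bigl(A_1, \ldots, \overline{\eta}^{n,\, R_i(m;1,\ldots,n,\ldots,1)[r]}(A_i, \ldots, A_{i+n-1}), \ldots, A_N\bigr).
\end{align*}
Applying the definition of $\overline{\eta}$ twice, the inner term equals $\eta^{n,\,R_i(\cdots)[r]}_{\alpha_i,\ldots,\alpha_{i+n-1}}(a_i,\ldots,a_{i+n-1}) \otimes \alpha_i\cdots\alpha_{i+n-1}$, and feeding this into the outer $\overline{\eta}^{m,\,R_0(\cdots)[r]}$ produces the $\Omega$-labels $\alpha_1,\ldots,\alpha_{i-1},\alpha_i\cdots\alpha_{i+n-1},\alpha_{i+n},\ldots,\alpha_N$ in the corresponding slots, and a semigroup tensor factor which by associativity of $\Omega$ collapses to $\alpha_1\cdots\alpha_N$ — crucially, the same for every $(m,n,i)$ occurring in the sum.

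Hence the entire sum factors as
\begin{align*}
\Biggl(\sum_{m+n=N+1} \sum_{i=1}^m \pm~ \eta^{m,\,R_0(\cdots)[r]}_{\alpha_1,\ldots,\alpha_i\cdots\alpha_{i+n-1},\ldots,\alpha_N}\bigl(a_1,\ldots,\eta^{n,\,R_i(\cdots)[r]}_{\alpha_i,\ldots,\alpha_{i+n-1}}(a_i,\ldots,a_{i+n-1}),\ldots,a_N\bigr)\Biggr) \otimes \alpha_1\cdots\alpha_N,
\end{align*}
and the bracketed sum vanishes by the $Dend_\infty$-family identity (\ref{dend-fam-inf}), because the Koszul sign $\pm = (-1)^{i(n+1) + n(|a_1|+\cdots+|a_{i-1}|)}$ depends only on $m,n,i$ and the degrees of the $a_j$, which are identical to the degrees of the corresponding $A_j = a_j \otimes \alpha_j$. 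Extending by multilinearity in $\mathcal{A}\otimes {\bf k}\Omega$ then yields the full $Dend_\infty$ relation.

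The only real subtlety is ensuring the signs match and that the combinatorial maps $R_0, R_i$ produce the ``same'' indices $[r]$ on both sides — but since the very same $(m,n,i,[r])$-indexed summands appear in both the $Dend_\infty$ identity for $\overline{\eta}$ and the $Dend_\infty$-family identity for $\eta$, this is automatic. No serious obstacle is expected; the proof is a direct unwinding once the notation is laid out.
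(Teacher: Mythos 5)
Your proposal is correct and follows essentially the same route as the paper's own proof: unfold $\overline{\eta}$ twice on each summand, observe that the semigroup label collapses to the common factor $\alpha_1\cdots\alpha_N$, and reduce the $Dend_\infty$ identity for $\overline{\eta}$ to the $Dend_\infty$-family identity (\ref{dend-fam-inf}) for $\eta$. Your remarks on the common tensor factor and on the signs depending only on the degrees of the $a_j$ make explicit what the paper leaves implicit, but the argument is the same.
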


\begin{proof}
For any $a_i \otimes \alpha_i \in \mathcal{A} \otimes {\bf k} \Omega$ $(1 \leq i \leq N)$ and $[r] \in C_N$, we have
\begin{align*}
&\overline{\eta}^{m, R_0 (m;1, \ldots, n, \ldots, 1)[r]} \big( a_1 \otimes \alpha_1, \ldots, \overline{\eta}^{n, R_i (m;1, \ldots, n, \ldots, 1)[r]} (  a_i \otimes \alpha_i , \ldots, a_{i+n-1} \otimes \alpha_{i+n-1} ), \ldots, a_N \otimes \alpha_N \big) \\
&= \overline{\eta}^{m, R_0 (m;1, \ldots, n, \ldots, 1)[r]} \big( a_1 \otimes \alpha_1, \ldots, \eta^{n, R_i (m;1, \ldots, n, \ldots, 1)[r]}_{\alpha_i, \ldots, \alpha_{i+n-1}} (a_i, \ldots, a_{i+n-1}) \otimes \alpha_i \cdots \alpha_{i+n-1} , \ldots, a_N \otimes \alpha_N \big) \\
&= \eta^{m, R_0 (m;1, \ldots, n, \ldots, 1)[r]}_{\alpha_1, \ldots, \alpha_i \cdots \alpha_{i+n-1}, \ldots, \alpha_N} \big(  a_1, \ldots, \eta^{n, R_i (m;1, \ldots, n, \ldots, 1)[r]}_{\alpha_i, \ldots, \alpha_{i+n-1}} (a_i, \ldots, a_{i+n-1}), \ldots, a_N \big).
\end{align*}
This shows that $\{ \overline{\eta}^1, \overline{\eta}^2, \ldots \}$ satisfy the ${Dend}_\infty$-algebra identities as $\{ \eta^1, \eta^2, \ldots \}$ satisfy the $Dend_\infty$-family identities (\ref{dend-fam-inf}). This proves the result.
\end{proof}

\begin{prop}
Let $(\mathcal{A}, \{ \eta^1, \eta^2, \ldots \})$ be a ${Dend}_\infty$-family algebra. Then $(\mathcal{A}, \{ \mu^1, \mu^2, \ldots \})$ is an $A_\infty$-algebra relative to $\Omega$, where
\begin{align*}
\mu^k := \eta^{k, [1]} + \cdots + \eta^{k, [k]}, \text{ for all } k \geq 1.
\end{align*}
\end{prop}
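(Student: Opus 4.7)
The strategy is to obtain the $A_\infty$-relative identities for the $\mu^k$ by summing the $\mathrm{Dend}_\infty$-family identities for the $\eta^k$ over all labels $[r] \in C_N$. More precisely, for fixed $N \geq 1$, homogeneous $a_1,\ldots,a_N \in \mathcal{A}$ and $\alpha_1,\ldots,\alpha_N \in \Omega$, I would add up the identities (\ref{dend-fam-inf}) over $r = 1, \ldots, N$, and show that the resulting sum is exactly the $A_\infty$-relative identity (\ref{inf-rel-om}) for the collection $\{\mu^k\}$.

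The key step is a careful case analysis of the combinatorial maps $R_0$ and $R_i$ of (\ref{r0})--(\ref{ri}) as $[r]$ varies. For fixed $m, n, i$ with $m+n = N+1$, I would split the sum $\sum_{r=1}^{N}$ into three ranges according to the position of $[r]$ relative to the $i$-th box: (a) $1 \leq r \leq i-1$, where $R_0[r] = [r]$ and $R_i[r] = [1] + \cdots + [n]$; (b) $i \leq r \leq i+n-1$, where $R_0[r] = [i]$ and $R_i[r] = [r-i+1]$; and (c) $i+n \leq r \leq m+n-1$, where $R_0[r] = [r-n+1]$ and $R_i[r] = [1] + \cdots + [n]$. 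In case (b) the inner sum runs over all $[s] \in C_n$ with the same outer $\eta^{m,[i]}$, producing $\eta^{m,[i]}(\ldots, \mu^n(\ldots), \ldots)$; in cases (a) and (c) the inner slot is already $\eta^{n, [1]+\cdots+[n]} = \mu^n$ while the outer labels $[r]$ (resp.\ $[r-n+1]$) sweep out the remaining indices $\{[1],\ldots,[i-1]\}$ (resp.\ $\{[i+1],\ldots,[m]\}$).

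Combining the three cases then yields $\sum_{s=1}^{m} \eta^{m,[s]}(\ldots, \mu^n(\ldots), \ldots) = \mu^m(\ldots, \mu^n(\ldots), \ldots)$, which is precisely the summand of (\ref{inf-rel-om}) for the given $(m,n,i)$. Since the sign $\pm$ depends only on $m,n,i$ and the degrees $|a_1|,\ldots,|a_{i-1}|$ (and not on $[r]$), it factors out of the sum, so the full sum $\sum_{r=1}^N$ of the $\mathrm{Dend}_\infty$-family identities collapses to the $A_\infty$-relative identity for $\{\mu^k\}$.

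The main obstacle is purely bookkeeping: keeping the index ranges and the three cases of $R_0, R_i$ straight. Once the combinatorial identification of the three ranges above is written out, the proof is essentially the same multilinearity/telescoping argument used in Proposition~\ref{dend-tot-mor} to show that summing the components of a dendriform-multiplication yields the total associative multiplication; the present statement is exactly the homotopy-and-family analogue of that collapsing phenomenon.
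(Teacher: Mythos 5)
Your proposal is correct and follows essentially the same route as the paper: the paper's proof also sums the $\mathrm{Dend}_\infty$-family identities over all $[r]\in C_N$ and observes that the result is the identity for $\eta^{m,[1]+\cdots+[m]}$ and $\eta^{n,[1]+\cdots+[n]}$, i.e.\ for the $\mu^k$. Your three-range case analysis of $R_0$ and $R_i$ just makes explicit the collapsing step that the paper leaves implicit, and it checks out.
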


\begin{proof}
Since $\{ \eta^1, \eta^2, \ldots \}$ is a $Dend_\infty$-family structure on $\mathcal{A}$, the identity (\ref{dend-fam-inf}) holds for all $[r] \in C_N$ with $N \geq 1$. For a fixed $N \geq 1$, by adding the $N$ many identities corresponding to each $[r] \in C_N$, we simply get
\begin{align*}
\sum_{m+n=N+1} \sum_{i=1}^m {\pm}~ \eta^{m, [1]+ \cdots + [m]}_{\alpha_1, \ldots, \alpha_i \cdots \alpha_{i+n-1}, \ldots, \alpha_N} \big(  a_1, \ldots, \eta^{n, [1] + \cdots + [n]}_{\alpha_i, \ldots, \alpha_{i+n-1}} (a_i, \ldots, a_{i+n-1}), \ldots, a_N  \big) = 0.
\end{align*}
Thus, we obtain the identity (\ref{inf-rel-om}) for $A_\infty$-algebra relative to $\Omega$ for the operations $\{ \mu^1, \mu^2, \ldots \}$. Hence the proof.
\end{proof}

 In the following, we introduce a notion of homotopy Rota-Baxter family on an $A_\infty$-algebra as a generalization of the Rota-Baxter family. We will show that a homotopy Rota-Baxter family induces a $Dend_\infty$-family structure.
 
 \begin{defn}
 Let $(\mathcal{A}, \{ \mu^1, \mu^2, \ldots \})$ be an $A_\infty$-algebra. A {\bf homotopy Rota-Baxter family} on $\mathcal{A}$ is a collection $\{ \mathcal{R}_\alpha : \mathcal{A} \rightarrow \mathcal{A} \}_{\alpha \in \Omega}$ of ${\bf k}$-linear maps of degree $0$ satisfying
 \begin{align}\label{hom-rb-fam}
 \mu^k \big(  \mathcal{R}_{\alpha_1} (a_1), \ldots, \mathcal{R}_{\alpha_k} (a_k) \big) = \sum_{r=1}^k \mathcal{R}_{\alpha_1 \cdots \alpha_k} \big( \mu^k \big( \mathcal{R}_{\alpha_1} (a_1), \ldots, a_r, \ldots, \mathcal{R}_{\alpha_k} (a_k)  \big)   \big), 
 \end{align}
for $a_i \in \mathcal{A}, \alpha_i \in \Omega$ and $k \geq 1.$
 \end{defn}

\begin{remark}
If the $A_\infty$-algebra $(\mathcal{A}, \{ \mu^1, \mu^2, \ldots \})$  is concentrated in arity $0$, then we have seen that $\mathcal{A} = \mathcal{A}^0$ is nothing but an ordinary associative algebra (i.e., $\mu^k = 0$ for $k \neq 2$). In this case, a homotopy Rota-Baxter family is nothing but a Rota-Baxter family.
\end{remark}
 
 \begin{thm}
 Let  $(\mathcal{A}, \{ \mu^1, \mu^2, \ldots \})$  be an $A_\infty$-algebra and $\{ \mathcal{R}_\alpha : \mathcal{A} \rightarrow \mathcal{A} \}_{\alpha \in \Omega}$ be a homotopy Rota-Baxter family. Then $(\mathcal{A}, \{ \eta^1, \eta^2, \ldots \})$ is a $Dend_\infty$-family algebra, where
\begin{align*}
\eta^k \in \mathrm{FamHom}_{\Omega^{\times k}} (\mathbf{k}[C_k] \otimes \mathcal{A}^{\otimes k}, \mathcal{A}), ~ \eta^{k, [r]}_{\alpha_1, \ldots, \alpha_k} (a_1, \ldots, a_k ) := \mu^k \big( \mathcal{R}_{\alpha_1} (a_1), \ldots, a_r, \ldots, \mathcal{R}_{\alpha_k} (a_k)   \big), 
\end{align*} 
for $[r] \in C_k$, $\alpha_i \in \Omega$ and $a_i \in \mathcal{A}$.
\end{thm}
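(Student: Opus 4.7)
The plan is to verify the three requirements of a $Dend_\infty$-family algebra for the proposed operations $\eta^{k,[r]}$. First, the family condition is immediate: since the $r$-th input slot of $\eta^{k,[r]}_{\alpha_1,\ldots,\alpha_k}$ contains $a_r$ unaffected by any $\mathcal{R}$, the operation is independent of $\alpha_r$. Second, $|\eta^{k,[r]}_{\alpha_1,\ldots,\alpha_k}|=k-2$ because each $\mathcal{R}_\alpha$ has degree $0$. The real task is to verify identity (\ref{dend-fam-inf}) for each fixed $[r]\in C_N$, and my strategy is to show that, after an application of (\ref{hom-rb-fam}), every summand of the left-hand side becomes a term of the ordinary $A_\infty$-identity evaluated at the tuple $(b_1,\ldots,b_N)$ with $b_j:=\mathcal{R}_{\alpha_j}(a_j)$ for $j\neq r$ and $b_r:=a_r$.

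Fix $[r]\in C_N$ and split the double sum in (\ref{dend-fam-inf}) into three groups according to the position of $r$ inside the $m$-box partition of sizes $(1,\ldots,n,\ldots,1)$: either (A) $r\leq i-1$, (B) $i\leq r\leq i+n-1$, or (C) $r\geq i+n$. In case (B), $[r]$ sits inside the $i$-th box, so by (\ref{r0}) and (\ref{ri}) we have $R_0(m;1,\ldots,n,\ldots,1)[r]=[i]$ and $R_i(m;1,\ldots,n,\ldots,1)[r]=[r-i+1]$. Unfolding the definitions of $\eta^{m,[i]}$ and $\eta^{n,[r-i+1]}$ shows directly that the $(m,n,i)$-summand equals $\mu^m\bigl(b_1,\ldots,b_{i-1},\mu^n(b_i,\ldots,b_{i+n-1}),b_{i+n},\ldots,b_N\bigr)$: the outer $\mathcal{R}$'s land on every slot except the $i$-th, and inside the inner $\mu^n$ every input is $\mathcal{R}$-decorated except the one corresponding to $a_r$.

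In cases (A) and (C) we have instead $R_i(m;1,\ldots,n,\ldots,1)[r]=[1]+\cdots+[n]$, so the inner argument expands to $\sum_{s=1}^{n}\mu^n\bigl(\mathcal{R}_{\alpha_i}(a_i),\ldots,a_{i+s-1},\ldots,\mathcal{R}_{\alpha_{i+n-1}}(a_{i+n-1})\bigr)$; since $r$ lies outside $[i,i+n-1]$, the $i$-th slot of the outer $\mu^m$ carries the operator $\mathcal{R}_{\alpha_i\cdots\alpha_{i+n-1}}$, and the homotopy Rota-Baxter family identity (\ref{hom-rb-fam}) collapses this sum to the single term $\mu^n(\mathcal{R}_{\alpha_i}(a_i),\ldots,\mathcal{R}_{\alpha_{i+n-1}}(a_{i+n-1}))=\mu^n(b_i,\ldots,b_{i+n-1})$. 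The remaining slots of the outer $\mu^m$ again contribute the $b_j$, so in all three cases the $(m,n,i)$-summand equals $\mu^m(b_1,\ldots,\mu^n(b_i,\ldots,b_{i+n-1}),\ldots,b_N)$. The signs $\pm = (-1)^{i(n+1)+n(|a_1|+\cdots+|a_{i-1}|)}$ transfer verbatim to the $A_\infty$-signs at $(b_1,\ldots,b_N)$ because $\mathcal{R}_\alpha$ has degree $0$ forces $|b_j|=|a_j|$. Summing over $m+n=N+1$ and $1\leq i\leq m$ then yields the $A_\infty$-identity evaluated at $(b_1,\ldots,b_N)$, which is zero. The main subtlety is organising the three position cases uniformly and recognising that (\ref{hom-rb-fam}) is exactly the mechanism that turns the nonlocal inner sum generated by $R_i[r]=[1]+\cdots+[n]$ into a fully $\mathcal{R}$-decorated $\mu^n$, thereby unifying the case analysis.
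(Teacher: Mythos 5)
Your proposal is correct and follows essentially the same route as the paper: both arguments identify each summand of the $Dend_\infty$-family identity for a fixed $[r]$ with the corresponding summand of the ordinary $A_\infty$-identity evaluated at the tuple obtained from $(a_1,\ldots,a_N)$ by applying $\mathcal{R}_{\alpha_j}$ to every slot except the $r$-th, using the homotopy Rota--Baxter family identity in the form $\mu^n(\mathcal{R}_{\alpha_i}(a_i),\ldots,\mathcal{R}_{\alpha_{i+n-1}}(a_{i+n-1}))=\mathcal{R}_{\alpha_i\cdots\alpha_{i+n-1}}\big(\eta^{n,[1]+\cdots+[n]}_{\alpha_i,\ldots,\alpha_{i+n-1}}(a_i,\ldots,a_{i+n-1})\big)$ precisely when $r$ lies outside the $i$-th box. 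The only cosmetic difference is direction (you unfold the $Dend_\infty$-family sum into the $A_\infty$-sum, while the paper substitutes into the $A_\infty$-identity and reads off the $Dend_\infty$-family identity) and that you write out the case $i\leq r\leq i+n-1$ explicitly where the paper says ``similarly.''
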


\begin{proof}
Since $\{ \mathcal{R}_\alpha : \mathcal{A} \rightarrow \mathcal{A} \}_{\alpha \in \Omega}$ is a homotopy Rota-Baxter family, it follows from (\ref{hom-rb-fam}) that
\begin{align*}
 \mu^k \big(  \mathcal{R}_{\alpha_1} (a_1), \ldots, \mathcal{R}_{\alpha_k} (a_k) \big)  = \mathcal{R}_{\alpha_1 \cdots \alpha_k} \big( \eta^{k, [1] + \cdots + [k]}_{\alpha_1, \ldots, \alpha_k}  (a_1, \ldots, a_k) \big), \text{ for } k \geq 1.
\end{align*}
Moreover, since $(\mathcal{A}, \{ \mu^1, \mu^2, \ldots \})$ is an $A_\infty$-algebra, we have
\begin{align}\label{double-sum}
\sum_{m+n=N+1} \sum_{i=1}^m \pm~ \mu^m \big( a_1, \ldots, a_{i-1}, \mu^n (a_i, \ldots, a_{i+n-1}), \ldots, a_N   \big) = 0.
\end{align}
In this identity, let us replace the tuple $(a_1, \ldots, a_N)$ of elements of $\mathcal{A}$ by $(\mathcal{R}_{\alpha_1} (a_1), \ldots, a_r, \ldots, \mathcal{R}_{\alpha_N}(a_N))$, for some $1 \leq r \leq N$. We will see that we obtain $Dend_\infty$-family algebra identities. For any fixed $m, n, i$ with $m+n = N+1$ and $1 \leq i \leq m$, if $r \leq i-1$, then the term insider the double summation of (\ref{double-sum}) looks
\begin{align}\label{same}
&\mu^m \big(  \mathcal{R}_{\alpha_1} (a_1), \ldots, a_r, \ldots, \mathcal{R}_{\alpha_{i-1}} (a_{i-1}), \mu^n \big( \mathcal{R}_{\alpha_i} (a_i), \ldots, \mathcal{R}_{\alpha_{i+n-1}} (a_{i+n-1})  \big), \ldots, \mathcal{R}_{\alpha_N}(a_N)   \big) \nonumber \\
&= \mu^m \big( \mathcal{R}_{\alpha_1} (a_1), \ldots, a_r, \ldots, \mathcal{R}_{\alpha_{i-1}} (a_{i-1}), \mathcal{R}_{\alpha_i \cdots \alpha_{i+n-1}} (\eta^{n, [1] +\cdots+[n]}_{\alpha_i, \ldots, \alpha_{i+n-1}} (a_i, \ldots, a_{i+n-1}) ), \ldots, \mathcal{R}_{\alpha_N}(a_N)  \big) \nonumber \\
&= \mu^{m, [r]}_{\alpha_1, \ldots, \alpha_i \cdots \alpha_{i+n-1}, \ldots, \alpha_N} \big(   a_1, \ldots, a_{i-1}, \eta^{n, [1] +\cdots+[n]}_{\alpha_i, \ldots, \alpha_{i+n-1}} (a_i, \ldots, a_{i+n-1}), \ldots, a_N \big) \nonumber \\
&= \eta^{m, R_0 (m;1, \ldots, n, \ldots, 1) [r]}_{ \alpha_1, \ldots, \alpha_i \cdots \alpha_{i+n-1}, \ldots, \alpha_N } \big( a_1, \ldots, a_{i-1},  \eta^{n, R_i (m;1, \ldots, n, \ldots, 1)[r]}_{\alpha_i, \ldots, \alpha_{i+n-1}} (a_i, \ldots, a_{i+n-1})   , \ldots, a_N \big).
\end{align}
(This is precisely the term {inside the} double summation of the $Dend_\infty$-family identity (\ref{dend-fam-inf})). Similarly, if $r$ lies in either of the intervals $i \leq r \leq i+n-1$ or $i+n \leq r \leq N$, then the term inside the double summation of (\ref{double-sum}) looks the same as (\ref{same}). Hence we get the $Dend_\infty$-family algebra identity (\ref{dend-fam-inf}) for any $[r] \in C_N$. This completes the proof.
\end{proof}

\medskip

\medskip


\noindent {\bf Acknowledgements.} The author would like to thank Indian Institute of Technology (IIT) Kharagpur for providing the beautiful academic atmosphere where the research has been carried out.

\end{document}